\documentstyle[11pt,amssymb,amsmath,amscd,amsbsy,amsfonts,theorem,hyperref,accents,fontenc]{article}

\input xy
\xyoption{all}

\pagestyle{plain}
\textwidth=17.5cm
\oddsidemargin=-1cm
\evensidemargin=-1cm
\topmargin=-1cm
\textheight=23cm

\newcommand{\zz}{{\Bbb Z}}

\newcommand{\nn}{\Bbb N}

\newcommand{\pp}{{\Bbb P}}

\newcommand{\ff}{{\Bbb F}}

\newcommand{\ddim}{\operatorname{dim}}

\newcommand{\op}[1]{\operatorname{#1}}

\newcommand{\la}{\langle}
\newcommand{\ra}{\rangle}
\newcommand{\row}{\rightarrow}

\renewcommand{\leq}{\leqslant}
\renewcommand{\geq}{\geqslant}

\newcommand{\nichego}[1]{}

\newcommand{\ov}[1]{\overline{#1}}

\newcommand{\wt}[1]{\widetilde{#1}}

\newcommand{\Ch}{\operatorname{Ch}}

\newcommand{\CH}{\operatorname{CH}}

\newcommand{\dmk}{\op{DM}(k)}

\newcommand{\dmkF}[1]{\op{DM}(k;#1)}
\newcommand{\dmEF}[2]{\op{DM}(#1;#2)}

\newcommand{\dmELF}[3]{\op{DM}({#1}/{#2};{#3})}

\newcommand{\nump}{\,\stackrel{\scriptscriptstyle{Num(p)}}{\sim}\,}

\newcommand{\dmeffkF}[1]{\op{DM}^{-}_{eff}(k,F)}

\newcommand{\nnorm}[1]{|\!|{#1}|\!|}

\newcommand{\Qed}{\hfill$\square$\smallskip}
\newcommand{\Red}{\hfill$\triangle$\smallskip}

\newenvironment{proof}{\noindent{\it Proof}:}{\vskip 5mm}

\newtheorem{proposition}{Proposition}[section]{\bf}{\it}
\newtheorem{theorem}[proposition]{Theorem}{\bf}{\it}
\newtheorem{lemma}[proposition]{Lemma}{\bf}{\it}
{\bf}{\it}
\newtheorem{definition}[proposition]{Definition}{\bf}{\rm}
\newtheorem{conj}[proposition]{Conjecture}{\bf}{\it}
{\bf}{\it}
{\bf}{\it}
\newtheorem{remark}[proposition]{Remark}{\bf}{\rm}
{\bf}{\rm}
{\bf}{\it}
{\bf}{\it}
{\bf}{\it}

{\bf}{\it}
\newtheorem{corollary}[proposition]{Corollary}{\bf}{\it}
{\bf}{\it}

\begin{document}

\title{On isotropic and numerical equivalence of cycles}
\author{Alexander Vishik}
\date{}
\maketitle

\begin{abstract}
We study the conjecture claiming that, over a flexible field, {\it isotropic Chow groups}
coincide with {\it numerical Chow groups} (with $\ff_p$-coefficients).
This conjecture is essential for understanding the structure of the isotropic motivic category and that of the tensor triangulated spectrum of Voevodsky category of motives. We prove the conjecture for the new range of cases. In particular, we show that, for a given variety $X$,
it holds for sufficiently large primes $p$. We also prove
the $p$-adic analogue. This permits to interpret integral
numerically trivial classes in $\CH(X)$ as $p^{\infty}$-anisotropic ones.
\end{abstract}

\section{Introduction}

The difference in complexity between algebraic geometry and topology can be visualised by the fact that in topology there is only one kind of a point, while in algebraic geometry there are many different types of points, namely, the spectra of all possible finitely generated extensions of the base field. It is a natural idea to supply algebro-geometric objects with their ``local'' versions parametrised by points of various kinds, versions whose complexity would be comparable to that of topological objects. With this in mind, the local versions of the Voevodsky motivic category $\dmkF{\ff_p}$ were introduced in the article \cite{Iso}. These, so-called, {\it isotropic motivic categories} provide realisation functors for ``global'' motives and are parametrized by finitely
generated extensions of $k$. More precisely, the isotropic motivic category $\dmELF{F}{F}{\ff_p}$ is obtained from
the global category $\dmEF{F}{\ff_p}$ by killing the motives of all $p$-{\it anisotropic} varieties (the idea to use such a localisation goes back to T.Bachmann - \cite{BQ}). Following \cite{Iso}, for any field $E$, we denote as $\wt{E}=E(t_1,t_2,\ldots)=E(\pp^{\infty})$ the {\it flexible closure} of it. Given a base field $k$, we may introduce the natural partial ordering (depending on the prime $p$) on the set of finitely generated extensions
$E/k$. Namely $E/k\geq F/k$, if there is a correspondence
$Q\rightsquigarrow P$ of degree prime to $p$, where $E=k(Q)$ and $F=k(P)$. Then we obtain the family of {\it isotropic realization functors}:
$$
\psi_{E,p}:\dmk\row\dmELF{\wt{E}}{\wt{E}}{\ff_p},
$$
where $E/k$ runs over all equivalence classes of finitely generated extensions of $k$ under the equivalence 
defined by the ordering above. The passage to the flexible
closure is needed here to make the target category ``small''. Indeed, in the case of a flexible field, it is expected that the isotropic motivic category should be, 
in various respects, reminiscent of the topological motivic
category $D(\ff_p)$. In particular, homs between compact objects should be finite groups and Balmer's tensor triangulated spectrum of this category should consist of
a single point. 
But aside from similarities, there are visible differences. For example, it was computed for $p=2$ - \cite[Theorem 3.7]{Iso} that the {\it isotropic motivic cohomology} of a point form an external
algebra with generators - duals of Milnor's 
operations. This, in turn, leads to the computation of the
{\it stable isotropic homotopy groups of spheres} by F.Tanania, who identified these groups with the $E_2$-term
of the classical Adams spectral sequence - \cite{ISHG}.

Another direction from which one may try to approach isotropic motives is the case of {\it pure motives}.
The homs between the respective {\it isotropic Chow motives} are described by {\it isotropic Chow groups}
$\Ch^*_{k/k}$.
These groups are obtained from the usual Chow groups
by moding out cycles coming from $p$-anisotropic varieties
- see \cite[2.1]{Iso}.
The central here is \cite[Conjecture 4.7]{Iso} claiming that, in the case of a flexible field, such groups coincide with numerical Chow groups (with finite coefficients):

\begin{conj} 
 \label{Main-conj}
 Let $k$ be a flexible field. Then 
 $\Ch^*_{k/k}=\Ch^*_{Num}$. 
\end{conj}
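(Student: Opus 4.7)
\emph{The easy direction.} The inclusion $\Ch^*_{k/k}\subseteq\Ch^*_{Num}$ (every isotropically trivial class is numerically trivial) follows directly from the projection formula and the definition of $p$-anisotropy. If $\alpha\in\Ch^*(X;\ff_p)$ can be written as $f_*\gamma$ for a proper morphism $f\colon Y\to X$ with $Y$ a $p$-anisotropic variety, then for any class $\beta$ on $X$ of complementary codimension, $\deg_X(\alpha\cdot\beta)=\deg_Y(\gamma\cdot f^*\beta)$. The right-hand side is the $\ff_p$-degree of a 0-cycle on a $p$-anisotropic variety, hence vanishes.

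\emph{The hard direction.} For the reverse inclusion, given a numerically trivial class $\alpha\in\Ch^*(X;\ff_p)$ over the flexible field $k$, the plan is to realize $\alpha$ as a combination of pushforwards from $p$-anisotropic varieties. I would first translate the problem into a factorization question in the isotropic motivic category $\dmELF{k}{k}{\ff_p}$: the vanishing of $\alpha$ in $\Ch^*_{k/k}$ amounts to a factorization of the corresponding correspondence through the motive of some $p$-anisotropic variety. Next, I would exploit the abundance of anisotropic varieties over a flexible field (norm varieties, Severi--Brauer varieties, and quadrics for $p=2$) to build explicit witnesses. The argument would proceed by induction on the dimension of the support of $\alpha$: numerical vanishing on $X$ imposes strong constraints on the restriction of $\alpha$ to lower-dimensional strata, which (after possibly passing through an anisotropic correspondence) could be handled by the inductive hypothesis. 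As a parallel avenue, one might try to establish a Jannsen-type semisimplicity for the isotropic category with $\ff_p$-coefficients; if available, it would force the canonical functor from isotropic to numerical motives to be an equivalence, making the two equivalence relations coincide.

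\emph{The main obstacle and partial results.} The hard direction is the crux. There is no universal construction producing $p$-anisotropic witnesses for an arbitrary numerically trivial class, and understanding the interaction between numerical relations on $X$ and the supply of $p$-anisotropic varieties over $k$ is genuinely deep. To reach the partial results advertised, one restricts the scope in two complementary ways. For sufficiently large primes $p$ (depending on $X$), the $p$-torsion in the relevant Chow groups is concentrated in finitely many classes, which can be handled by explicit construction of anisotropic witnesses. For the $p$-adic analogue, one assembles the mod-$p$ statements through an inverse system, exploiting the finite generation of integral numerical Chow groups to ensure convergence and thereby identifying integral numerically trivial classes as $p^{\infty}$-anisotropic ones.
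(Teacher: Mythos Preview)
The statement you were asked to prove is labeled a \emph{Conjecture} in the paper, and the paper does not prove it in full; it establishes only the partial cases recorded in Theorem~\ref{Main-thm}. Your easy direction is correct and is exactly the observation the paper makes (the degree pairing descends to $\Ch^*_{k/k}$ because degrees of zero-cycles on $p$-anisotropic varieties vanish modulo $p$). Beyond that, your proposal does not amount to a proof of either the full conjecture or the partial results.

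Concretely, the strategies you sketch for the hard direction---factorization through anisotropic motives, ad hoc use of norm/Severi--Brauer varieties or quadrics, induction on the dimension of the support, or a hoped-for Jannsen-type semisimplicity of the isotropic category---are each either circular or not developed to the point of yielding a single nontrivial case. In particular, invoking semisimplicity of the isotropic category is begging the question: that semisimplicity is precisely what the conjecture would imply, not an independent input. Your explanation of the large-$p$ phenomenon (``$p$-torsion concentrated in finitely many classes'') is also not the actual mechanism.

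The paper's route to the partial results is entirely different and has real content you are missing. Given a numerically trivial $u\in\Ch^r(X)$ with $r\leq p$, one first lifts $u$ to $K_0$: since $c_r(O_Z)=(r-1)!\,[Z]$ and $(r-1)!$ is invertible mod $p$ for $r\leq p$, there is $V\in K_0(X)$ with $c_r(V)=u$ and $c_i(V)=0$ for $i<r$. One then uses a carefully chosen linear combination of Adams operations $\Psi_m$, together with Steenrod operations, to force all higher Chern classes $c_d(V)$ to be numerically trivial as well (this is where the constraint $\ddim(X)<rp$ enters). Finally, a geometric blow-up construction (the ``Main construction'') rewrites $\pi^*c_\bullet(V)$ as the total Chern class of a sum of line bundles whose top Chern class is a numerically trivial \emph{complete intersection}; a generic representative of such a complete intersection is anisotropic over a flexible field, and an induction on $\dim V$ finishes. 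None of these ingredients---the $K_0$ lift, Adams/Steenrod manipulations, or the blow-up reduction to complete intersections---appear in your proposal.
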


It implies 
that (in the case of a flexible field) the category 
of isotropic Chow motives is equivalent to the category of
{\it numerical Chow motives} (with $\ff_p$-coefficients).
This, in turn, shows that {\it isotropic realization functors} $\psi_{E,p}$ should provide ``points'' for the triangulated
spectrum $\op{Spc}(\op{DM}_{gm}(k))$ in the sense of Balmer (\cite{Bal}) of (the compact part of) the
Voevodsky motivic category. Thus, we get many new points of the spectrum.

In \cite{Iso} Conjecture \ref{Main-conj} was proven for varieties of dimension $\leq 5$,
for divisors and for cycles of dimension $\leq 2$.
The principal aim of the current article is to expand the range of this result. Namely, 
in Theorem \ref{Main-thm} we will show that it still
holds for varieties of dimension $\leq 2p$ as well as for cycles of dimension $<p$. In particular, for any given variety $X$,
the conjecture is true for sufficiently large $p$. 

In comparison to \cite[Theorem 4.11]{Iso}, our method is
much more transparent. It is based on the use of $K_0$, as well as Adams and Steenrod operations. The Conjecture is equivalent to the fact that any $p$-numerically trivial
class is $p$-anisotropic. For divisors, this is clear, since the Chow groups of the variety surject to the Chow groups of the generic representative of a very ample linear system. So, if the system is numerically trivial, then this
generic divisor is anisotropic. At the same time, for a flexible field, there is no need to distinguish between $k$ and any 
finitely generated purely transcendental extension of it, so we have an ``analogue'' of our generic representative
already over $k$, which will be anisotropic too. This immediately extends to numerically trivial complete intersections. For an arbitrary numerically trivial cycle $u\in\Ch^r$, our strategy is to find $V\in K_0(X)$, such 
that $c_r(V)=u$ and all smaller classes are trivial. This can be done as long as $r\leq p$. Then using Adams operations we annihilate numerically all higher Chern classes of $V$, which is possible, if $\ddim(X)<rp$. 
Our element $u$ now is a part of a numerically trivial
total Chern class of some $V\in K_0(X)$. It remains to prove that such a total Chern class is always anisotropic.
This is done in Proposition \ref{cV-num-is} with the help
of a nice geometric construction of Section 
\ref{main construction} by induction on the dimension of a vector bundle $V$, using the case of a complete intersection. 

The result is easily extended to the $p$-primary and $p$-adic case. The only tool we loose in such passage are Steenrod operations, which affects the range. In Theorems
\ref{thm-primary} and \ref{thm-p-adic} we prove $p$-primary
and $p$-adic analogues of Conjecture \ref{Main-conj} for 
$\ddim(X)\leq p$ and for cycles of dimension $<(p-1)$.
This permits to interpret the integral numerically trivial
classes in $\CH(X)$ over any field $k$ (of characteristic zero) as $p^{\infty}$-anisotropic ones over it's flexible closure, where $p$ is any prime $\geq\ddim(X)$ - see Corollary \ref{p-nezav}. Thus, isotropic Chow groups can be used to approach (the classical) numerical equivalence of cycles with rational coefficients.

 \medskip

\noindent
{\bf Acknowledgements:}
The support of the EPSRC standard grant EP/T012625/1 is gratefully acknowledged. I'm grateful to the Referee for various useful suggestions
which improved the text.

\section{Isotropic Chow groups}

Everywhere below $k$ will be a field of
characteristic zero.

Let $n\in\nn$ and $\Ch^*=\CH^*/n$ be Chow groups modulo $n$. 

\begin{definition}
 Let $n\in\nn$ and $X$ be a scheme of finite type over $k$. 
 We say that $X$ is
 $n$-anisotropic, if degrees of all closed points on $X$
 are divisible by $n$.
\end{definition}

\begin{definition}
 Let $n\in\nn$, $X$ be a scheme over $k$ and $x\in\CH_r(X)$. Then
 $x$ is $n$-anisotropic, if there exists a proper map 
 $f:Y\row X$ from an $n$-anisotropic scheme $Y$ and $y\in\CH_r(Y)$ such that $f_*(y)=x$.
\end{definition}

Now we can introduce {\it isotropic Chow groups}:

\begin{equation}
\label{iso-Ch-gr}
 \Ch_{k/k}:=\Ch/(\,\text{$n$-anisotropic classes}\,).
\end{equation}

This defines an oriented cohomology theory with localisation in the sense of \cite[Definition 2.1]{SU}.
These groups describe homs in the category of {\it isotropic Chow motives} - see \cite[Proposition 5.4]{Iso}. 

From now on until Section \ref{main construction}, we will assume that $n=p$ is prime.
Recall from
\cite[Definition 2.17]{Iso} that the category of {\it isotropic Chow motives} is
the full subcategory of $\dmELF{k}{k}{\ff_p}$ given by the
image of $Chow(k,\ff_p)$ under the isotropic realisation
functor $\dmkF{\ff_p}\row\dmELF{k}{k}{\ff_p}$. 

For an arbitrary field $k$, the isotropic motivic category
may be pretty large. In particular, for algebraically closed field, it coincides with the original ``global''
motivic category (as there are no anisotropic varieties,
in this case). But there is a large class of, so-called, {\it flexible} fields for which isotropic realisation really simplifies things. 
A field $k$ is called {\it flexible}, if it is a purely
transcendental extension of infinite transcendence degree
$k=k_0(t_1,t_2,\ldots)$ of some other field. Any field can
be embedded into its {\it flexible closure} by adjoining free parameters. Such an embedding is conservative on motives.

For a smooth projective variety 
$X\stackrel{\pi}{\row}\op{Spec}(k)$, there is a natural degree pairing:
$$
\Ch^*(X)\times\Ch^*(X)\row\ff_p,
$$
given by $\la a,b\ra=\pi_*(a\cdot b)$.
Since degrees of all zero-cycles on anisotropic varieties
are divisible by $p$, it descends to $\Ch_{k/k}^*$.
Thus, we obtain a surjective map
$\Ch^*_{k/k}\twoheadrightarrow\Ch^*_{Num}$ to Chow groups modulo numerical equivalence (with $\ff_p$-coefficients).
According to Conjecture \ref{Main-conj}, this map should be an isomorphism, if $k$ is flexible.

This Conjecture implies that, for a flexible $k$, the category of isotropic Chow
motives $Chow(k/k;\ff_p)$ should be equivalent to the category
of numerical Chow motives $Chow_{Num}(k,\ff_p)$. The latter
additive category is semisimple. Moreover, since the degree pairing is defined on the level of topological realization,
the group of homs between two objects of the numerical category is a subquotient of homs of their topological realisations. In particular, such groups are finite.
The above Conjecture implies that the same properties should hold for isotropic Chow motives. 

In \cite[Theorem 4.11]{Iso} the Conjecture was proven in the following cases: for
varieties of dimension $\leq 5$, for divisors and for 
cycles of dimension $\leq 2$.
Our aim is to prove an extension of this result:

\begin{theorem}
 \label{Main-thm}
 The Conjecture \ref{Main-conj} holds for:
 \begin{itemize}
  \item[$(1)$] $\ddim(X)\leq 2p$; 
  \item[$(2)$] $\Ch_r$, with $r<p$;
  \item[$(3)$] $\Ch^r(X)$, for $r\leq p$ and $\ddim(X)<p^2-p+r$.
 \end{itemize}
\end{theorem}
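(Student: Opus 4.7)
My plan is to reduce the conjecture to the forthcoming Proposition \ref{cV-num-is}, which asserts that the total Chern class $c(V)$ of any $V \in K_0(X)$ with numerically trivial individual Chern classes is automatically $p$-anisotropic. Given a $p$-numerically trivial $u \in \Ch^r(X)$, the task becomes manufacturing such a $V$ with $c_r(V) = u$ and then invoking the proposition.

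The manufacture proceeds in two steps. First, I would lift $u$ to a $K_0$-class with vanishing lower Chern classes: using the Chern character and the Newton identities (which invert the relation between $c_r$ and the power sum $p_r$ with denominator a divisor of $r!$), one produces $V \in K_0(X)$ with $c_i(V) = 0$ for $i < r$ and $c_r(V) = u$, provided $(r-1)!$ is invertible modulo $p$, i.e.\ $r \leq p$. This accounts for the hypothesis $r \leq p$ in parts $(1)$ and $(3)$. Second, I would kill the higher Chern classes $c_i(V)$ for $i > r$ numerically. Since the Adams operation $\psi^k$ acts on the degree-$i$ graded piece as multiplication by $k^i$ (with corrections that either vanish or can be iteratively absorbed once $c_{<r}(V) = 0$), one seeks integers $a_k$ with $\sum_k a_k k^r$ a unit in $\ff_p$ and $\sum_k a_k k^i \equiv 0 \pmod p$ for $r < i \leq \ddim(X)$ — a Vandermonde-type problem modulo $p$. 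Adams operations alone handle the range $\ddim(X) < rp$; combined with Steenrod/Milnor operations $Q_j$, whose action on Chern classes breaks the Frobenius periodicity $k^i \equiv k^{i \bmod (p-1)}$, one reaches the sharper bound $\ddim(X) < p^2 - p + r$ of part $(3)$.

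Parts $(1)$ and $(2)$ are then assembled from part $(3)$. For $(1)$, smooth projective duality $\Ch^r(X) \simeq \Ch_{\ddim(X) - r}(X)$ splits the range: codimensions $r \leq p$ fall under $(3)$ since $\ddim(X) \leq 2p < p^2 - p + r$ for $p \geq 3$ and $r \geq 1$ (with the $p = 2$ edge already covered by the prior bound $\ddim(X) \leq 5$ from \cite{Iso}), while codimensions $r > p$ correspond to cycles of dimension $< p$ handled by $(2)$. For $(2)$, a class $u \in \Ch_r(X)$ with $r < p$ is supported on a closed $r$-dimensional subvariety; after desingularisation one lands in top codimension on a variety of dimension $r$, where the argument of $(3)$ applies directly.

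The principal obstacle I foresee is the second step of the manufacture: the Adams-based Vandermonde reasoning is constrained by the Frobenius periodicity modulo $p-1$, and extracting the sharp bound $p^2 - p + r$ of $(3)$ requires careful accounting of how Steenrod/Milnor operations act on Chern classes via Wu-type formulas, and of the cross-terms they introduce at each step. The first step (lifting via Newton identities) and the third step (invoking Proposition \ref{cV-num-is}) are essentially formal by comparison.
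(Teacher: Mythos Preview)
Your strategy for part $(3)$ matches the paper's: lift $u$ to $V\in K_0(X)$ with vanishing lower Chern classes (via invertibility of $(r-1)!$), annihilate the higher Chern classes numerically using Adams and Steenrod operations, then invoke Proposition \ref{cV-num-is}. Two corrections of detail: Adams operations alone only reach $\ddim(X)<r+p-1$ (this is exactly the bound that survives in the $p$-primary setting, Theorem \ref{thm-primary}); the combination with Steenrod operations $P^i$ (not the Milnor $Q_j$) is what gives $\ddim(X)<rp$, via Proposition \ref{St-Ch}. The further extension to $\ddim(X)<p^2-p+r$ is not obtained by squeezing more out of Steenrod, but by the simple trick of replacing $X$ by $X\times\pp^{p-r}$ and $u$ by $u\cdot h^{p-r}$, reducing to the case $r=p$.

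The genuine gap is in your argument for part $(2)$. Supporting $u$ on its $r$-dimensional cycle $S$ and desingularising does not help: the class on $\wt{S}$ that pushes forward to $u=[S]$ is the \emph{fundamental class} $[\wt{S}]\in\Ch^0(\wt{S})=\Ch_r(\wt{S})$, which is never numerically trivial on $\wt{S}$ (and is not in top codimension). Numerical triviality on $X$ simply does not descend to an arbitrary subvariety. The paper instead uses the moving techniques of \cite{Iso} (Proposition \ref{moving}) to trap $u$ on a smooth connected \emph{divisor} $Z\subset X$ with the key extra property that the restriction $\Ch^r(X)\twoheadrightarrow\Ch^r(Z)$ is surjective; this surjectivity plus the projection formula is precisely what forces $u$ to remain numerically trivial on $Z$. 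One then inducts on $\ddim(X)$, dropping the dimension by one at each step until $\ddim(Z)=2r+1$, where part $(3)$ applies. Part $(1)$ then follows from $(2)$ and $(3)$ as you say, with a separate short argument needed for the residual case $p=2$, $\ddim(X)=4$.
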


In particular, for a given variety $X$, the Conjecture is
valid for sufficiently large $p$.

\section{An outline of the proof}

The Conjecture \ref{Main-conj} is equivalent to the fact
that every numerically trivial class $u$ belongs to the 
subgroup generated by push-forwards from anisotropic varieties. 

We start with the main case: $u\in\Ch^r(X)$, where $r\leq p$ and $\ddim(X)<rp$.
If $Z\subset X$ is a closed subvariety of pure co-dimension $r$, then 
the $r$-th Chern class of $O_Z$ is equal to $(-1)^{r-1}(r-1)![Z]$,
while smaller Chern classes are zero (as $O_Z$ belongs to 
the $r$-th term of the topological filtration on $K_0(X)$).
Thus, if $r\leq p$, then $(r-1)!$ is invertible and there is $V\in K_0(X)$ such that
$c_r(V)=u$ and $c_i(V)=0$, for $i<r$. 

The next step is to make all Chern classes of $V$ numerically trivial. This is achieved by applying an appropriate linear combination of Adams operations to $V$.
Here it is essential that $\ddim(X)<rp$. As a result,
our class $u$ is one of the Chern classes of some $V\in K_0(X)$ whose total Chern class is numerically trivial.
In such a situation, the total Chern class of $V$ will always be anisotropic. 

Since we work modulo $p$, we may assume that $V$ is a 
vector
bundle. Passing to the variety of flags of $V$, we may assume that $V$ is equal to a direct sum of very ample line bundles.
In other words, we may assume that there is a set of very
ample divisor classes $a_i,\,i\in\ov{N}=\{1,\ldots,N\}$ 
such that $V=\oplus_{i=1}^N O(a_i)$. We will choose concrete cycles representing the above divisor classes.
Since the ground field is flexible, we can always replace it by any finitely generated purely transcendental extension. So, we may think of our divisors as of generic
representatives $A_i$ of the respective linear systems.
In particular, $\cup_i A_i$ is a divisor with strict normal crossings. 

Now, we will replace our data $(X; a_i, i\in\ov{N})$ by
$(\wt{X};b_j, j\in\ov{M})$, where 
$\pi:\wt{X}\row X$ is a blow up and 
$\pi^*c_{\bullet}(V)=c_{\bullet}(\wt{V})$, where
$\wt{V}=\oplus_{j=1}^M O(b_j)$. We will call it the {\it Main construction}. Namely, $\pi$ is the blow up in all the
intersections $A_I=\cap_{i\in I}A_i$, $I\subset\ov{N}$. 
If $\wt{\rho}_I$ is the class
of the proper pre-image of $A_I$ under $\pi$, then one
can choose (the very ample classes equal modulo $p$ to)
$b_j:=\sum_{|I|\geq j}\wt{\rho}_I$ and 
check that the elementary symmetric functions of $b_j$s are
$\pi^*$ of those of $a_i$s. This time, though, 
the number of $b_j$s
is no more than $\ddim(X)$. Moreover, the top Chern class of $\tilde{V}$ is a complete intersection which is still 
numerically trivial. By the standard arguments, the generic representative of this complete intersection is anisotropic. Applying the {\it Main construction} again we
see that, modulo anisotropic classes, we may replace $V$
by a vector bundle of dimension one less (as the top divisor class of the new arrangement will be exactly the class of the pre-image of the mentioned complete intersection $B_{\ov{M}}=\cap_j B_j$, and so, an anisotropic class). It remains to apply the induction on $\ddim(V)$. 

This proves the result for $\Ch^r(X)$, where $r\leq p$
and $\ddim(X)<rp$. Multiplying $X$ by $\pp^{p-r}$ and passing to cycles of co-dimension exactly $p$, we obtain the whole case (3) of the Theorem.  
What remains is the case of $\Ch_r(X)$,
where $r<p$ and $\ddim(X)>2r+1$. Here we show using the
moving techniques of \cite[Section 6]{Iso} that, after
an appropriate blow up, our numerically trivial class $u$
is supported on some smooth divisor $D$ and is numerically trivial already on $D$. The result follows by the induction
on $\ddim(X)$ from the main case above. This proves (2). 
Finally, (1) follows from (2) and (3), aside from the case
$p=2$, $\ddim(X)=4$, which is straightforward.

\section{Adams and Steenrod operations}
The purpose of this section is to annihilate numerically
the Chern classes of our $V\in K_0(X)$ while keeping
the $r$-th one: $c_r(V)=u$.  

On $K_0$ we have the action of the Adams operations
$\Psi_m,\,m\in\zz$, originally introduced in \cite{Ad}. Here $\Psi_m$ is a multiplicative operation characterized by the action 
$\Psi_m(L)=L^{\otimes m}$ on a line
bundle $L$. 

Let us describe, how these operations affect characteristic classes of vector bundles.

\begin{proposition}
 \label{psi-act}
 Let $\Psi=\sum_i\Psi_{m_i}$ and $V'=\Psi(V)$. Then
 $$
 c_d(V')=\left(\sum_i m_i^d\right)\cdot c_d(V)+\,\text{polynomial in }\, c_l(V),\,
 l<d.
 $$
\end{proposition}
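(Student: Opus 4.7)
The plan is to reduce the claim to a direct expansion of the total Chern class of $V'$, using the splitting principle and the Whitney sum formula.

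First, for a single Adams operation $\Psi_m$: by the splitting principle, if the Chern roots of $V$ are $x_1,\ldots,x_r$, then $\Psi_m(V)$ has Chern roots $mx_1,\ldots,mx_r$, so
$$c(\Psi_m(V))=\prod_j(1+mx_j)=\sum_{k\geq 0}m^k\,c_k(V).$$
Since $V'=\sum_i\Psi_{m_i}(V)$ in $K_0(X)$, the Whitney formula gives
$$c(V')=\prod_i c(\Psi_{m_i}(V))=\prod_i\Bigl(\sum_{k\geq 0}m_i^k\,c_k(V)\Bigr),$$
and collecting the degree-$d$ component of this product yields
$$c_d(V')=\sum_{\substack{k_1,\ldots,k_N\geq 0\\ k_1+\cdots+k_N=d}}\prod_i m_i^{k_i}\,c_{k_i}(V).$$

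Finally, I would split the right-hand sum into the $N$ summands in which a single $k_i$ equals $d$ (and the rest are $0$) and the remaining ones. The former contribute exactly $\bigl(\sum_i m_i^d\bigr)\,c_d(V)$, while in each of the latter every $k_i$ is strictly less than $d$, so those summands assemble into a polynomial in the $c_l(V)$ with $l<d$, as required.

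There is no real obstacle here. The only mild subtlety is that $\Psi_{m_i}(V)$ is in general only a virtual bundle, but since both the splitting principle and Whitney's formula apply on all of $K_0$, the computation goes through unchanged.
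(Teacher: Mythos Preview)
Your proof is correct and follows essentially the same approach as the paper: the paper's two-line argument notes that $\Psi_m$ multiplies Chern roots by $m$ (hence $c_k(\Psi_m(V))=m^k c_k(V)$) and then invokes Cartan's formula, which is exactly the Whitney-sum expansion you carry out in detail. The only difference is that you spell out the degree-$d$ bookkeeping explicitly.
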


\begin{proof}
 Since $\Psi_m$ multiplies the Chern roots by $m$,
 we have $c_k(\Psi_m(V))=m^k\cdot c_k(V)$. The rest follows
 from the Cartan's formula.
 \Qed
\end{proof}

\begin{lemma}
 \label{mi-s}
 Suppose $(p-1)\,/\hspace{-1.6mm}|\,(d-r)$. Then there is a (finite)
 collection of integers $\{m_i|i\in I\}$ such that
 $\displaystyle\sum_i m_i^r\equiv 1\,(mod\, p)$ and
 $\displaystyle\sum_i m_i^d\equiv 0\,(mod\, p)$.
\end{lemma}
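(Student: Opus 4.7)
The plan is to reduce Lemma \ref{mi-s} to a short linear algebra statement over $\mathbb{F}_p$ and then verify that by duality. Thinking of a multiset $\{m_i\}$ of integers as assigning a multiplicity in $\{0,1,\ldots,p-1\}$ to each residue $m\in\{1,\ldots,p-1\}$, the problem is equivalent to showing that the vector $(1,0)\in\mathbb{F}_p^2$ lies in the $\mathbb{F}_p$-span $W$ of the vectors
$$v_m:=(m^r,\,m^d)\in\mathbb{F}_p^2,\qquad m\in\mathbb{F}_p^*.$$
Indeed, any $\mathbb{F}_p$-linear relation $\sum_m c_m v_m=(1,0)$ lifts to an integral multiset by taking the representative $\widetilde{c}_m\in\{0,1,\ldots,p-1\}$ of each $c_m$ and repeating $m$ exactly $\widetilde{c}_m$ times in $\{m_i\}$.

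To show $(1,0)\in W$ I would argue by contradiction. If not, then by linear duality there exists $(a,b)\in\mathbb{F}_p^2$ with $a\neq 0$ that annihilates every $v_m$. This gives
$a\,m^r+b\,m^d\equiv 0\pmod p$ for all $m\in\mathbb{F}_p^*$, and dividing by the unit $m^r$ yields $a+b\,m^{d-r}\equiv 0\pmod p$ for every $m\in\mathbb{F}_p^*$. The hypothesis $(p-1)\nmid(d-r)$ is equivalent to the function $m\mapsto m^{d-r}$ not being identically $1$ on $\mathbb{F}_p^*$; so it assumes at least two distinct values $\xi_1\neq\xi_2$. Subtracting the two identities $a=-b\,\xi_1$ and $a=-b\,\xi_2$ forces $b(\xi_1-\xi_2)=0$, hence $b=0$, and then $a=0$, contradicting $a\neq 0$.

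There is essentially no obstacle here, since the argument is routine. The one place where the hypothesis enters is the non-constancy of $m\mapsto m^{d-r}$ on $\mathbb{F}_p^*$, and it is used sharply: if $(p-1)\mid(d-r)$ then all $v_m$ lie on the diagonal $\{(x,x)\}$, the span $W$ is that line, and $(1,0)\notin W$, so the conclusion genuinely fails. Thus the hypothesis is both necessary and, by the above, sufficient.
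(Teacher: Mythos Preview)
Your proof is correct. The key observation---that $m\mapsto m^{d-r}$ is non-constant on $\ff_p^*$ precisely when $(p-1)\nmid(d-r)$---is the same one the paper uses, but the paper finishes more directly: it simply picks one $m$ with $m^d\not\equiv m^r\pmod p$ and then observes that a suitable combination of copies of $m$ and copies of $1$ does the job (solving the obvious $2\times 2$ system over $\ff_p$). Your argument instead packages this as a duality statement about the span of all $v_m$ and argues by contradiction. What the paper's version buys is brevity and an explicit construction; what yours buys is a clean explanation of why the hypothesis is sharp, since you exhibit the failure when $(p-1)\mid(d-r)$ as the collapse of $W$ onto the diagonal.
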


\begin{proof}
 Since $(d-r)$ is not divisible by $(p-1)$, we can find $m$
 such that $m^d\not\equiv m^r\,(mod\,p)$. Combining $x$ copies of $m$ with $y$ copies of $1$, where $x$ and $y$ are any natural numbers such that $x\, (mod\,p)$ is the inverse of $(m^r-m^d)$ and $y\, (mod\, p)$ is $-x\cdot m^d$, we get the needed collection.
 \Qed
\end{proof}

Let us now investigate the action of Steenrod operations
on characteristic classes.
Such characteristic classes are parametrized by the ordered collections $\vec{r}=(r_1,r_2,\ldots)$ of non-negative integers, almost all zeroes. 
To such a collection we can assign two numbers: the degree
$\nnorm{\vec{r}}=\sum_ii\cdot r_i$ and the inner degree
$|\vec{r}|=\sum_i r_i$. 
The characteristic class $c_{\vec{r}}(V)$ is the coefficient
at ${\vec{d}}^{\vec{r}}=\prod_{i\geq 1}d_i^{r_i}$ of
$\prod_l(\sum_{i\geq 0}d_i\cdot\lambda_l^{i})$,
where $\lambda_l$ are {\it Chern roots} of $V$, where we assume
$d_0=1$ - cf. \cite[(2.3)]{Qu71}. Recall that {\it Chern roots} of $V$ are ``formal'' elements
$\lambda_l$ such that the Chern classes $c_j(V)$ are the elementary
symmetric functions $sym_j(\{\lambda_l\})$ of these ($\lambda_l$s can be realised as divisor classes on the variety $\op{Flag}(V)$ of complete flags of $V$ - see \cite[2.3]{SU}). The mentioned coefficient $c_{\vec{r}}(V)$ is a symmetric function on roots, and so, 
by the Theorem on Symmetric Functions,
is expressible as a polynomial in Chern classes $c_j$s.
It belongs to $\CH^{\nnorm{\vec{r}}}$.
In particular, the usual Chern class $c_r$ corresponds
to the collection $(r,0,0,\ldots)$.

Express $c_{\vec{r}}$ as a polynomial in $c_j$s.
Let us denote as $\kappa(\vec{r})$ the coefficient at $c_n$ (where $n=\nnorm{\vec{r}}$) in this 
polynomial. In other words,
$$
c_{\vec{r}}=\kappa(\vec{r})\cdot c_n+\,\text{decomposable
terms}.
$$
Let 
$\displaystyle\binom{|\vec{r}|}{r_1,r_2,...,r_k}=
\frac{|\vec{r}|!}{r_1!r_2!\cdot\ldots\cdot r_k!}$
be the multinomial coefficient.

\begin{proposition}
 \label{koeff-kap}
 $$
 \kappa(\vec{r})=(-1)^{\nnorm{\vec{r}}-|\vec{r}|}
 \binom{|\vec{r}|}{r_1,r_2,...,r_k}\cdot
 \frac{\nnorm{\vec{r}}}{|\vec{r}|}.
 $$
\end{proposition}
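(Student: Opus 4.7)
The plan is to isolate $\kappa(\vec{r})$ by evaluating $c_{\vec{r}}$ formally on a specific choice of Chern roots that annihilates every monomial in $c_1,\ldots,c_{n-1}$. Writing $n=\nnorm{\vec{r}}$, take $n$ Chern roots $\lambda_l=\mu^{1/n}\zeta^l$ for $l=0,\ldots,n-1$, where $\zeta=e^{2\pi i/n}$ and $\mu=(-1)^{n+1}$. Then $\prod_l(1+t\lambda_l)=1+t^n$, so at this specialization $c_j=0$ for $1\leq j<n$ and $c_n=1$. Since the relation $c_{\vec{r}}=\kappa(\vec{r})\cdot c_n+\,\text{decomposable terms}$ is a universal polynomial identity in the $c_j$'s, every decomposable term vanishes at the specialization, so $\kappa(\vec{r})$ equals the specialized value of $c_{\vec{r}}$.

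To compute the latter, set $g(x)=1+\sum_{i\geq 1}d_i x^i$ and use $\prod_l g(\lambda_l)=\exp\bigl(\sum_l\log g(\lambda_l)\bigr)$. Expanding the logarithm via $\log(1+y)=\sum_m(-1)^{m+1}y^m/m$ combined with the multinomial theorem yields
$$
\sum_l\log g(\lambda_l)=\sum_{\vec{s}\neq 0}\frac{(-1)^{|\vec{s}|+1}}{|\vec{s}|}\binom{|\vec{s}|}{s_1,s_2,\ldots,s_k}\prod_i d_i^{s_i}\cdot p_{\nnorm{\vec{s}}}(\lambda),
$$
where $p_j(\lambda)=\sum_l\lambda_l^j$ is the power sum. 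In our specialization $p_j(\lambda)$ vanishes unless $n\mid j$, and a direct calculation gives $p_n(\lambda)=n\mu=n(-1)^{n+1}$.

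Extracting the coefficient of $\vec{d}^{\vec{r}}$ from $\exp(\cdot)$ amounts to summing over multiset decompositions $\vec{r}=\vec{s}_1+\ldots+\vec{s}_t$ with $\vec{s}_j\neq 0$. For the corresponding product of power sums not to vanish, each $\nnorm{\vec{s}_j}$ must be a positive multiple of $n$; but $\sum_j\nnorm{\vec{s}_j}=\nnorm{\vec{r}}=n$, forcing $t=1$ and $\vec{s}_1=\vec{r}$. This yields
$$
\kappa(\vec{r})=\frac{(-1)^{|\vec{r}|+1}}{|\vec{r}|}\binom{|\vec{r}|}{r_1,\ldots,r_k}\cdot n(-1)^{n+1}=(-1)^{\nnorm{\vec{r}}-|\vec{r}|}\binom{|\vec{r}|}{r_1,\ldots,r_k}\cdot\frac{\nnorm{\vec{r}}}{|\vec{r}|}.
$$

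The substantive step is the ``degree-sum'' argument that truncates the exponential series to its linear term under the chosen specialization; everything else is a bookkeeping application of standard expansions. As a sanity check, the case $|\vec{r}|=1$ recovers Newton's identity $p_k=(-1)^{k-1}k\cdot e_k+\,\text{decomposable}$, matching the claimed formula.
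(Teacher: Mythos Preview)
Your argument is correct. The specialization to $n$ Chern roots at scaled $n$-th roots of unity is legitimate because $c_1,\ldots,c_n$ remain algebraically independent in $n$ variables, so the universal polynomial relation $c_{\vec{r}}=\kappa(\vec{r})c_n+\text{decomposable}$ is faithfully detected there; your power-sum and $\exp/\log$ manipulations are formal power series identities in the $d_i$ over $\cc$ and go through as written.

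The route, however, is different from the paper's. The paper does not specialize the $\lambda_l$ at all. Instead it introduces auxiliary roots $\mu_m$ for the $d_i$ and uses the symmetry of $\prod_{l,m}(1+\lambda_l\mu_m)$ to identify the coefficient of $\vec{d}^{\,\vec{r}}c_n$ with the coefficient of $d_n\vec{c}^{\,\vec{r}}$, i.e.\ with the coefficient of $\vec{c}^{\,\vec{r}}$ in the power sum $p_n=\sum_l\lambda_l^n$; it then reads that coefficient off from the logarithmic-derivative identity $\sum_j p_j t^j=-t\,(\log\prod_l(1-\lambda_l t))'$. Your approach reaches the same Newton--Girard type expansion from the other side: the specialization kills all $p_j$ with $n\nmid j$, so the $\exp$ collapses to its linear term and you are again extracting the $\vec{d}^{\,\vec{r}}$-coefficient of (a constant times) $p_n$. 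What the paper's duality buys is a coordinate-free explanation of \emph{why} only the power sum matters; what your specialization buys is a direct, elementary computation that avoids the auxiliary $\mu$-variables and the ``multiply back by the denominator'' verification step.
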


\begin{proof}
 Let $\mu_m$ be the ``roots'' of $d_i$s. That is,
 $d_i=sym_i(\{\mu_m\})$ is the $i$-th elementary symmetric function. Then
 $$
 \prod_l(\sum_id_i\lambda_l^i)=\prod_{l,m}(1+\lambda_l\mu_m)=\prod_m(\sum_jc_j\mu_m^j)=:
 \sum_{\vec{r},\vec{s}}\vec{d}^{\vec{r}}\vec{c}^{\vec{s}}
 \cdot\kappa(\vec{r},\vec{s}).
 $$
 From the $\vec{r}-\vec{s}$ symmetry, the coefficient
 $\kappa(\vec{r})$ at the monomial
 $\vec{d}^{\vec{r}}\cdot c_{\nnorm{\vec{r}}}$ is the same
 as that at the monomial $d_{\nnorm{\vec{r}}}\cdot\vec{c}^{\vec{r}}$. In other words, it is the coefficient at
 $\vec{c}^{\vec{r}}$ in the additive characteristic class
 $\sum_l\lambda_l^n$, where $n=\nnorm{\vec{r}}$.
 It is given by the Girard's formula \cite[Problem 16-A]{MS}. We have:
 \begin{equation*}
  \begin{split}
  \sum_{j\geq 1}\left(\sum_l\lambda_l^j\right)\cdot t^j=
  \sum_l\frac{\lambda_lt}{1-\lambda_lt}=
  \frac{-t\left(\prod_l(1-\lambda_lt)\right)'}{\prod_l(1-\lambda_lt)}=
  \frac{-\sum_j(-1)^jjc_jt^j}{\sum_j(-1)^jc_jt^j}.
  \end{split}
 \end{equation*}
This fraction is equal to
$$
\sum_{\vec{r}\neq\vec{0}}(-1)^{\nnorm{\vec{r}}-|\vec{r}|}
\binom{|\vec{r}|}{r_1,r_2,...,r_k}\cdot\frac{\nnorm{\vec{r}}}{|\vec{r}|}\cdot\vec{c}^{\vec{r}}\cdot t^{\nnorm{\vec{r}}},
$$
as can be seen by observing that if we denote as $c_{\bullet}(t)=\sum_{r\geq 1}c_rt^r$ the {\it $t$-normalized total Chern class}, then 
$$
\op{log}(1+c_{\bullet}(-t))=-\sum_{n\geq 1}\frac{(-1)^nc_{\bullet}(-t)^n}{n}=
-\sum_{n\geq 1}\sum_{\vec{r};|\vec{r}|=n}(-1)^{\nnorm{\vec{r}}-|\vec{r}|}\binom{|\vec{r}|}{r_1,\ldots,r_k}\cdot\frac{\vec{c}^{\vec{r}}}{|\vec{r}|}t^{\nnorm{\vec{r}}},
$$
after which it remains only to differentiate and multiply by $-t$.
 \Qed
\end{proof}

It will be convenient to use an alternative slightly different notation
for collections. Namely, we will denote the collection
$\vec{r}=(r_1,r_2,\ldots)$ also as $(1^{r_1},2^{r_2},\ldots)$, 
where we will list only terms with non-zero $r_i$s.
To avoid confusion, we will denote the respective Chern
classes as $s_{1^{r_1},2^{r_2},...}=c_{\vec{r}}$.

We have the action of Steenrod operations
$P^i:\Ch^*\row\Ch^{*+i(p-1)}$ on (mod $p$) Chow groups - \cite{Br,VoOP}.

\begin{proposition}
 \label{St-Ch}
 The Chern class $c_d(V)$ belongs to the subgroup generated
 by products of smaller Chern classes and images of Steenrod
 operations (on smaller Chern classes), for all
 $d\neq l\cdot p^t$, for $1\leq l<p$.
\end{proposition}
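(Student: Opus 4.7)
The plan is to combine the splitting principle, Cartan's formula for the total Steenrod operation $P^\bullet$, Proposition \ref{koeff-kap}, and Lucas' theorem. On a line-bundle Chern root $\lambda$ one has $P^\bullet(\lambda) = \lambda + \lambda^p$, so Cartan's formula applied to $c_{d'} = e_{d'}(\lambda_1,\lambda_2,\ldots)$ gives at once the identity
$$
P^k(c_{d'}(V)) \;=\; s_{1^{d'-k},\, p^k}(V)
$$
for $0 \leq k \leq d'$. That is, a single Steenrod operation $P^k$ applied to a single Chern class $c_{d'}$ produces exactly the characteristic class attached to the partition with $r_1 = d'-k$ and $r_p = k$.

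Next I would apply Proposition \ref{koeff-kap} to this partition $\vec r$. Here $|\vec r| = d'$ and $\nnorm{\vec r} = d' + k(p-1)$; call the latter $d$. The proposition yields
$$
P^k(c_{d'}) \;=\; s_{1^{d'-k},\,p^k} \;=\; \kappa \cdot c_d \;+\; (\text{products of smaller Chern classes}),
$$
with $\kappa = (-1)^{k(p-1)}\binom{d'}{k}\cdot d/d'$. A short manipulation using Pascal's identity, together with $(-1)^{k(p-1)}\equiv 1 \pmod p$, simplifies this coefficient to $\kappa \equiv \binom{d'-1}{k} \pmod p$. Thus whenever $\binom{d'-1}{k}$ is a unit modulo $p$, the class $c_d$ is an $\ff_p$-multiple of $P^k(c_{d'})$ up to decomposables, which is exactly the desired conclusion for that $d$.

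To finish, I would choose $k$ appropriately. Given $d$ whose base-$p$ expansion $d = \sum_i d_i p^i$ has at least two nonzero digits, let $t$ be the smallest index with $d_t \neq 0$, and take $k := p^t$, $d' := d - p^t(p-1)$. Since $d$ has a further nonzero digit above position $t$, one gets $d \geq p^t + p^{t+1}$, whence $d' \geq 2p^t \geq 2$, so $c_{d'}$ is genuinely a smaller Chern class. The remaining point is to verify $\binom{d'-1}{p^t} \not\equiv 0 \pmod p$: analysing the base-$p$ digits of $d' - 1 = d - p^{t+1} + p^t - 1$ one sees that its $t$-th digit is precisely $d_t$ (the borrowing that may occur if $d_{t+1}, d_{t+2}, \ldots$ vanish only propagates through positions strictly above $t$), and Lucas' theorem then gives $\binom{d'-1}{p^t} \equiv d_t \not\equiv 0 \pmod p$.

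The main obstacle I anticipate is the combinatorial digit-tracking: checking that the $t$-th digit of $d' - 1$ is always $d_t$ no matter how the borrowing propagates through consecutive zero digits of $d$, and carrying out cleanly the algebraic reduction $\kappa \equiv \binom{d'-1}{k} \pmod p$ from the rational expression produced by Proposition \ref{koeff-kap}. Both steps are elementary but are where sloppiness would bite; once they are verified, the proposition follows at once.
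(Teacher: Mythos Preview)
Your argument is correct and follows the same overall route as the paper: compute $P^k(c_{d'}) = s_{1^{d'-k},\,p^k}$ via the splitting principle and Cartan's formula, apply Proposition~\ref{koeff-kap} to extract the leading coefficient, simplify it modulo $p$ to $\binom{d'-1}{k}$ (the paper writes this as $\binom{d-kp+k-1}{k}$, which is the same thing), and then find a $k$ making this coefficient a unit.

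The only divergence is in that last combinatorial step. The paper argues contrapositively: assuming $\binom{d-kp+k-1}{k}\equiv 0$ for all $k$, the case $k=1$ forces $d<p$ or $p\mid d$, and in the latter case restricting to $k=pk_1$ reduces the question to $d_1=d/p$, so by induction $d=l\cdot p^t$ with $l<p$. You instead construct $k$ directly: taking $k=p^t$ where $t$ is the position of the lowest nonzero base-$p$ digit of $d$, your digit analysis shows the $t$-th digit of $d'-1$ is exactly $d_t$, and Lucas then gives $\binom{d'-1}{p^t}\equiv d_t\not\equiv 0$. Both arguments are short; yours is a touch more constructive, while the paper's recursive reduction avoids the explicit borrow-tracking you flagged as the delicate point.
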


\begin{proof}
 Recall that $c_d=s_{1^d}$. Since the {\it Total Steenrod
 operation} $P^{Tot}=\sum_{k\geq 0}P^k$ acts on Chern roots by 
 $P^{Tot}(\lambda)=(\lambda+\lambda^p)$ (\cite[Remark 8.5]{Br}), we see that
 $P^k(s_{1^{d-k(p-1)}})=s_{1^{d-kp},p^k}$.
 The latter collection $\vec{r}$ has $\nnorm{\vec{r}}=d$
 and $|\vec{r}|=d-k(p-1)$ with only two non-zero terms $r_1=d-kp$ and $r_p=k$. Thus, by Proposition 
\ref{koeff-kap}, the respective Chern class is equal to
\begin{equation*}
 \begin{split}
 &\frac{d}{d-k(p-1)}\binom{d-k(p-1)}{k}\cdot s_{1^d}+
 \,\text{decomp. terms}=
 \frac{d}{k}\binom{d-kp+(k-1)}{k-1}\cdot s_{1^d}+\,\text{decomp. terms}.
 \end{split}
\end{equation*}
Thus, if there 
exists, at least, one $k$ such that the coefficient
$\displaystyle\frac{d}{k}\binom{d-kp+(k-1)}{k-1}$ is not divisible
by $p$, then $s_{1^d}$ belongs to the subgroup generated
by products of smaller Chern classes and by the images of Steenrod operations applied to smaller Chern classes.

Suppose $d$ is such that, for any natural $k$, 
$p\,|\,\displaystyle\frac{d}{k}\binom{d-kp+(k-1)}{k-1}$. 
Looking at $k=1$ we see that either $d<p$, or 
$p\,|\,d$.
In the latter case, let $d=pd_1$. Then the above divisibility holds for all $k$ if and only if it holds for $p\,|\,k$. For $k=pk_1$, the respective coefficient
is $\displaystyle\frac{d_1}{k_1}\binom{(d_1-k_1p)p+(k_1-1)p+(p-1)}{(k_1-1)p+(p-1)}$, $p$-adic valuation of which is equal to that of $\displaystyle\frac{d_1}{k_1}\binom{d_1-pk_1+(k_1-1)}{k_1-1}$. Thus, $d_1$ has the same property as
$d$. By induction, we obtain that then $d=l\cdot p^t$,
for some $l<p$ and $t\geq 0$.
 \Qed
\end{proof}

Suppose, $r\leq p$ and $V\in K_0(X)$ is such that $c_r(V)=u$ is our numerically trivial class, while $c_i(V)=0$, for $i<r$. 

\begin{proposition}
 \label{annih-Chern}
 We can replace $V$ (by another element of $K_0(X)$) to make $c_d(V)$ numerically trivial,
 for all $d<rp$, while keeping $c_r(V)=u$.
\end{proposition}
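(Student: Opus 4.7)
The plan is to proceed by induction on $d$ from $d=r$ to $d=rp-1$, maintaining at each stage a representative $V\in K_0(X)$ with (i) $c_l(V)=0$ for $l<r$, (ii) $c_r(V)=u$, and (iii) $c_l(V)$ numerically trivial for $r\leq l\leq d$. The base case $d=r$ is the hypothesis, and the inductive step splits according to whether $p-1$ divides $d-r$.

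First, if $(p-1)\nmid(d-r)$, Lemma \ref{mi-s} provides integers $\{m_j\}$ with $\sum_j m_j^r\equiv 1$ and $\sum_j m_j^d\equiv 0\pmod{p}$, and I replace $V$ by $V':=\sum_j\Psi_{m_j}(V)$. Proposition \ref{psi-act} writes each $c_l(V')$ as $\bigl(\sum_j m_j^l\bigr)c_l(V)$ plus a polynomial $P_l$ whose every monomial is a product of at least two Chern classes $c_i(V)$ with $i<l$. For $l<r$ all such $c_i(V)$ vanish, so $c_l(V')=0$; for $l=r$ the same reason gives $P_r=0$ while the leading coefficient is $1$, so $c_r(V')=u$; for $r<l\leq d$, each nonzero monomial of $P_l$ must contain some factor $c_i(V)$ with $r\leq i<l$, which is numerically trivial by the inductive hypothesis, so $P_l$ is numerically trivial by the ideal property of numerical equivalence, and at $l=d$ the leading coefficient also vanishes. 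Properties (i)-(iii) thus transfer up to $d$.

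Second, if $(p-1)\mid(d-r)$, write $d=r+k(p-1)$. The constraints $r<d<rp$ force $1\leq k\leq r-1$, and a short arithmetic check using $r\leq p$ rules out $d=lp^t$ with $1\leq l<p$: for $t=0$ one would need $d<p$ despite $d\geq r+(p-1)\geq p$; for $t=1$ the equation $r+k(p-1)=lp$ forces $r-k=(l-k)p$, incompatible with $1\leq r-k\leq r\leq p$ together with $k\leq r-1$; for $t\geq 2$, $d\leq r+(r-1)(p-1)<p^2\leq lp^t$. Proposition \ref{St-Ch} then expresses $c_d(V)$ as a combination of products of $c_l(V)$ with $l<d$ and Steenrod images $P^{k'}(c_l(V))$ of such classes. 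The inductive hypothesis makes the products numerically trivial (any nonzero product contains a factor $c_l(V)$ with $r\leq l<d$); for the Steenrod images I invoke the fact that on smooth projective varieties $P^i$ preserves numerical triviality on $\Ch^*$ with $\ff_p$-coefficients. Hence $c_d(V)$ is already numerically trivial and no modification is needed.

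The main obstacle is justifying that Steenrod operations preserve numerical triviality. I expect to obtain this by induction on $i$, combining Cartan's formula $P^i(uv)=\sum_a P^a(u)P^{i-a}(v)$ with a Riemann--Roch-type identity of the form $\pi_*P^i(w)=\ddeg\bigl(w\cdot Q_i(T_X)\bigr)$ for appropriate Wu-type classes $Q_i$ of the tangent bundle: for $u$ numerically trivial the right-hand side vanishes upon taking $w=u\cdot v$, so the Cartan expansion rewrites $\ddeg(P^i(u)\cdot v)$ as $-\sum_{a<i}\ddeg(P^a(u)\cdot P^{i-a}(v))$, which vanishes by the induction.
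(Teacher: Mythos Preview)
Your proof is correct and follows essentially the same strategy as the paper: induction on $d$, using a combination of Adams operations (via Proposition~\ref{psi-act} and Lemma~\ref{mi-s}) and Steenrod operations (via Proposition~\ref{St-Ch}, together with the fact that the $P^i$ respect numerical equivalence, which the paper simply cites from \cite[Proposition 4.6]{Iso}). The only difference is organizational: the paper splits cases according to whether $d=l\cdot p^t$ with $1\leq l<p$ (using Steenrod when not, Adams when so), whereas you split according to whether $(p-1)\mid(d-r)$ (using Adams when not, Steenrod when so); your arithmetic check that these two dichotomies are compatible on the range $r<d<rp$ is exactly what the paper's final sentence ``$d/r$ is not a power of $p$'' encodes.
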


\begin{proof}
Let us prove by induction on $d<rp$ that we can make 
$c_i(V)$ numerically trivial, for $i\leq d$.
By assumption, we have it for $d=r$. Suppose, 
$c_i(V)\nump 0$, for $i<d$. By \cite[Proposition 4.6]{Iso},
Steenrod operations respect numerical equivalence. Hence,
by Proposition \ref{St-Ch}, if $d\neq l\cdot p^t$, for $1\leq l<p$, then $c_d(V)$ is numerically trivial as well.
If $d=l\cdot p^t$, with $1\leq l<p$, but $l\neq r$ (or $l\neq 1$, if $r=p$), then $(d-r)$ is not divisible
by $(p-1)$. Hence, by Proposition \ref{psi-act}
and Lemma \ref{mi-s}, we can find an appropriate linear
combination $\Psi$ of Adams operations which makes
$c_d$ numerically trivial, keeps $c_r$ still equal to $u$
and keeps all $c_i$ numerically trivial, for $i<d$, and zero, for $i<r$.
Hence, we can make the induction step as long as
$\displaystyle\frac{d}{r}$ is not a power of $p$. This is always
so for $d<rp$.
 \Qed
\end{proof}

\section{The Main construction}
\label{main construction}

Let $X$ be a smooth projective variety, $a_i,\,i\in\ov{N}$ 
be very ample divisor classes on it and
$A_i$ be smooth cycles representing them, such that
$\cup_{i\in\ov{N}}A_i$ is a divisor with strict normal crossings.
Denote as $A_I$ the faces $\cap_{i\in I}A_i$ of our divisor, where $I\subset\ov{N}$.

Let $\pi:\wt{X}\row X$ be the blow-up of all $A_I$s,
for $I\subset\ov{N}$. Let $\wt{\rho}_I$ be the class of the 
proper pre-image of $A_I$ and $\rho_I$ be (the pull-back of)
$c_1(O(-1))$ of the $I$-th blow up. Then 
$$
\rho_I=\sum_{I\subset J}\wt{\rho}_J
\hspace{12mm}\text{and}\hspace{12mm}
\pi^*(a_i)=\sum_{i\in I}\wt{\rho}_I.
$$
Let $2^{\ov{N}}$ be the set of subsets of $\ov{N}$.
For any $I\subset\ov{N}$, consider $U_I\subset 2^{\ov{N}}$,
given by $U_I=\{J|I\subset J\}$. Let's close the set of 
these subsets of $2^{\ov{N}}$ under $\cap$ and $\cup$, 
and call the subsets obtained this way {\it good}.
Clearly, these are exactly those subsets which have the 
property that if $I$ is an element, then so is $J$, for all $J\supset I$.
For any {\it good} $U\subset 2^{\ov{N}}$, consider
$\rho_U:=\sum_{J\in U}\wt{\rho}_J$. 
In particular, $\rho_{U_I}=\rho_I$.

\begin{lemma}
 \label{per-ob}
 For any good $U,W\subset 2^{\ov{N}}$ we have:
 $$
 (t-\rho_U)(t-\rho_W)=(t-\rho_{U\cap W})(t-\rho_{U\cup W}).
 $$
\end{lemma}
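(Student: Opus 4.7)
\medskip
\noindent\textbf{Proof proposal for Lemma \ref{per-ob}.} The plan is to expand both sides of the identity in $t$ and check separately the linear and constant coefficients. For the coefficient of $t$, we need $\rho_U+\rho_W=\rho_{U\cap W}+\rho_{U\cup W}$, which is immediate from the tautological identity $\mathbf{1}_U+\mathbf{1}_W=\mathbf{1}_{U\cap W}+\mathbf{1}_{U\cup W}$ of characteristic functions on $2^{\ov{N}}$, combined with the definition $\rho_U=\sum_{J\in U}\wt{\rho}_J$. So the whole content of the lemma is the quadratic identity $\rho_U\rho_W=\rho_{U\cap W}\rho_{U\cup W}$.

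To attack this, I would decompose $U=V\sqcup A$ and $W=V\sqcup B$, where $V=U\cap W$, $A=U\setminus W$ and $B=W\setminus U$, and write $\rho_V,\rho_A,\rho_B$ for the corresponding partial sums of $\wt{\rho}_J$'s. A short expansion gives
$$
\rho_U\rho_W-\rho_{U\cap W}\rho_{U\cup W}=(\rho_V+\rho_A)(\rho_V+\rho_B)-\rho_V(\rho_V+\rho_A+\rho_B)=\rho_A\rho_B,
$$
so the identity reduces to showing $\rho_A\rho_B=\sum_{J\in A,\,J'\in B}\wt{\rho}_J\wt{\rho}_{J'}=0$.

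The next step is combinatorial: because $U$ and $W$ are good, i.e.\ closed under taking supersets, every pair $(J,J')$ appearing in this sum consists of incomparable subsets of $\ov{N}$. Indeed, if $J\subset J'$, then $J\in U$ and upward closure of $U$ forces $J'\in U$, contradicting $J'\in B\subset\ov{N}\setminus U$; symmetrically $J'\subset J$ would force $J\in W$, contradicting $J\in A$. Hence the lemma is reduced to the single geometric claim: for any pair of incomparable subsets $J,J'\subset\ov{N}$, the divisor classes satisfy $\wt{\rho}_J\cdot\wt{\rho}_{J'}=0$ in $\CH(\wt{X})$.

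This geometric claim is the main obstacle, and I would prove it by showing that the proper pre-images $\wt{A}_J$ and $\wt{A}_{J'}$ are literally disjoint in $\wt{X}$. In $X$ they meet exactly along the deeper stratum $A_{J\cup J'}$, which is nonempty only if $J\cup J'$ indexes an actual face (otherwise the claim is trivial); since $J$ and $J'$ are incomparable, $J\cup J'$ is strictly larger than either, and so the blow-up of $A_{J\cup J'}$ appears before the (strict transforms of the) blow-ups of $A_J$ and $A_{J'}$ in the iterated sequence. A local coordinate computation at a point of $A_{J\cup J'}$ with $A_i=\{x_i=0\}$ shows that after blowing up $A_{J\cup J'}$ the proper transforms of $A_J$ and $A_{J'}$ already become separated, and subsequent blow-ups do not bring them back together; this is the standard separation property of iterated blow-ups of strict normal crossing strata (in the spirit of the De Concini--Procesi wonderful model). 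Hence $\wt{A}_J\cap\wt{A}_{J'}=\emptyset$, their intersection class vanishes, and the lemma follows.
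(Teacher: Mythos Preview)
Your proof is correct and follows essentially the same route as the paper: both arguments reduce the identity to the vanishing $(\rho_U-\rho_{U\cap W})(\rho_W-\rho_{U\cap W})=0$ (your $\rho_A\rho_B=0$), use upward-closure of good subsets to see that any $K\in U\setminus W$ and $L\in W\setminus U$ are incomparable, and conclude from the disjointness of the proper transforms $\wt{E}_K,\wt{E}_L$ for incomparable $K,L$. The only difference is cosmetic: the paper states the disjointness of incomparable proper transforms as a fact, while you supply the standard justification via the order of blow-ups and a local-coordinate check.
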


\begin{proof}
A {\it good} subset containing $I$ will necessarily contain any $J$ with $I\subset J$.
 Let $\wt{E}_I$ be the proper pre-image of $A_I$ under 
 $\pi$. Then $\wt{E}_K$ and $\wt{E}_L$ don't intersect
 unless $K\subset L$, or $L\subset K$.
 Since for any $\wt{\rho}_K$ from $(\rho_U-\rho_{U\cap W})$
 and any $\wt{\rho}_L$ from $(\rho_W-\rho_{U\cap W})$,
 neither $K\subset L$, nor $L\subset K$, we obtain that
 $(\rho_U-\rho_{U\cap W})(\rho_W-\rho_{U\cap W})=0$. 
 Since, by the very definition,
 $\rho_{U\cup W}=\rho_U+\rho_W-\rho_{U\cap W}$ (recall, that
 $\rho_U=\sum_{J\in U}\wt{\rho}_J$), we get
 the formula.
 \Qed
\end{proof}

\begin{remark}
 \label{dvum-knol}
 In other words, the 2-dimensional vector bundles
 $O(\rho_U)\oplus O(\rho_W)$ and
 $O(\rho_{U\cap W})\oplus O(\rho_{U\cup W})$ have the
 same Chern classes in $\CH^*$. Moreover, if $A^*$ is an oriented cohomology theory and we denote
 as $\rho_U^A=c_1^A(O(\rho_U))$ and $\wt{\rho}_J^A=c_1^A(O(\wt{\rho}_J))$ the first 
 $A$-Chern classes of the respective line bundles, then 
 $\rho_U^A$ is the $A$-formal sum $\sum_{J\in U}^A\wt{\rho}_J^A$ and
 the 
 identities $\rho^A_{U\cup W}=\rho^A_U+\rho^A_W-\rho^A_{U\cap W}$ and
 $(\rho^A_U-\rho^A_{U\cap W})(\rho^A_W-\rho^A_{U\cap W})=0$ still hold. 
 To see it, it is sufficient to observe that $\rho_{U\backslash U\cap W}^A$ and $\rho_{W\backslash U\cap W}^A$ have disjoint support.
 Thus, our 2-dimensional vector bundles have the same Chern classes in any oriented cohomology theory. In particular, looking at $c_1^{K_0}$, we get that our vector bundles coincide as elements of $K_0$. 
 \Red
\end{remark}

Let $U_k\subset 2^{\ov{N}}$, $k\in\ov{m}$ be a collection
of good subsets. For any $P\subset\ov{m}$ denote as
$U_P$ the intersection $\cap_{k\in P}U_k$, and for 
$l\in\ov{m}$, denote as
$W_l$ the subset $\cup_{P\subset\ov{m};|P|=l}U_P$.
Let $\displaystyle V'=\oplus_{k\in\ov{m}}O(\rho_{U_k})$
and $\displaystyle V''=\oplus_{l\in\ov{m}}O(\rho_{W_l})$.
Then we have:

\begin{proposition}
 \label{lem-dva}
 In the above situation, $[V']=[V'']$ in $K_0(\wt{X})$ and
 $$
 \prod_{k\in\ov{m}}(t-\rho_{U_k})=
 \prod_{l\in\ov{m}}(t-\rho_{W_l}).
 $$
\end{proposition}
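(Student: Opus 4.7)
The plan is to prove both assertions simultaneously by exhibiting a finite sequence of elementary \emph{swaps} converting $(U_1,\ldots,U_m)$ into the sequence $(W_m,W_{m-1},\ldots,W_1)$. Since neither $\prod_k(t-\rho_{U_k})$ nor the class of $\bigoplus_k O(\rho_{U_k})$ in $K_0(\wt X)$ depends on the ordering, such a sequence yields the proposition at once. A swap replaces a pair of entries $U,W$ of the current sequence by $(U\cap W,\,U\cup W)$. The family of good subsets is a sublattice of the Boolean lattice $2^{2^{\ov N}}$, hence stable under $\cap$ and $\cup$, so swaps keep us within our collection; by Lemma~\ref{per-ob} a swap preserves the polynomial, and by Remark~\ref{dvum-knol} it preserves the $K_0$-class.

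I would proceed by induction on $m$, with $m\leq 2$ immediate. For $m\geq 3$, applying the inductive hypothesis to the leading $m-1$ entries converts the sequence into $(W'_{m-1},W'_{m-2},\ldots,W'_1,U_m)$, where $W'_l:=\bigcup_{|P|=l,\,P\subset\ov{m-1}}\bigcap_{k\in P}U_k$ is the analogue of $W_l$ for the first $m-1$ subsets. Then I would ``bubble'' $U_m$ leftward, performing successive swaps at positions $(m-1,m),\,(m-2,m-1),\,\ldots,\,(1,2)$.

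The engine of the inductive step is the identity
\begin{equation*}
W_l \;=\; W'_l\cup(W'_{l-1}\cap U_m),
\end{equation*}
obtained by splitting the defining union $W_l=\bigcup_{|Q|=l,\,Q\subset\ov m}\bigcap_{k\in Q}U_k$ according to whether $m\in Q$ and invoking distributivity in $2^{2^{\ov N}}$. Combined with the trivial inclusion $W'_j\subset W'_{j-1}$ (adding intersections shrinks good subsets), this identity shows that the $j$-th swap transforms the pair $(W'_j,\,W'_{j-1}\cap U_m)$ at positions $(m-j,\,m-j+1)$ into $(W'_j\cap U_m,\,W_j)$. Iterating, after the $j$-th swap the sequence reads
\begin{equation*}
\bigl(W'_{m-1},\ldots,W'_{j+1},\,W'_j\cap U_m,\,W_j,\,W_{j-1},\ldots,W_1\bigr),
\end{equation*}
and the final swap ($j=m-1$), using $W'_{m-1}\cap U_m=W_m$, produces $(W_m,W_{m-1},\ldots,W_1)$.

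The main obstacle is purely combinatorial bookkeeping: verifying the key identity $W_l=W'_l\cup(W'_{l-1}\cap U_m)$ and tracing the effect of each successive swap on the sequence. Both tasks reduce to elementary distributivity in the Boolean lattice $2^{2^{\ov N}}$, so no serious difficulty arises once the bookkeeping is set up.
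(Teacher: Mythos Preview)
Your proof is correct and follows essentially the same approach as the paper: induction on $m$, with the inductive step driven by repeated application of the two-term swap identity of Lemma~\ref{per-ob} (and its $K_0$-companion Remark~\ref{dvum-knol}). The paper's own proof is terse---it simply asserts that the induction step follows from these two results---whereas you spell out the ``bubbling'' explicitly via the identity $W_l=W'_l\cup(W'_{l-1}\cap U_m)$; but the underlying mechanism is the same.
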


\begin{proof}
Induction on $m$.
For $m=1$, it is a tautology.
The induction step follows from Lemma \ref{per-ob}
and Remark \ref{dvum-knol}.
 \Qed
\end{proof}

Let $\Ch^*=\CH^*/n$, for some $n\in\nn$.
Consider $U_i=U_{\{i\}}$, $i\in\ov{N}$. Then
$\rho_{U_i}=\pi^*(a_i)$, while
$\rho_{W_j}=\sum_{J\subset\ov{N};|J|\geq j}\wt{\rho}_J$.
Let us choose some very ample representative $b_j$ for
$\rho_{W_j}\in\Ch^1(\wt{X})$. Then it follows from
Proposition \ref{lem-dva} that 
for $V=(\oplus_{i\in\ov{N}}O(a_i))$ and
$\wt{V}=(\oplus_{j\in\ov{N}}O(b_j))$, the bundles 
$\pi^*(V)$ and $\wt{V}$ have the same Chern classes in $\Ch^*$.

Now we replace the data $(X;A_i, i\in\ov{N})$ by
$(\wt{X}; B_j, j\in\ov{N})$, where we choose some smooth
representatives of our classes transversal to each other. 
Note that, in reality,
some of these classes $b_j$s may be zero (modulo $n$) or anisotropic. In particular, by the very construction, $b_j=0$, for $j>\ddim(X)$,
and so, we can replace $\wt{V}$ by a vector bundle of 
dimension $\leq\ddim(X)$. 
Moreover, the top Chern class of this vector bundle is 
represented by a complete intersection (of very ample divisors).

\section{Proof of the Main Theorem}

Now we are ready to prove Theorem \ref{Main-thm}.
We will first prove part $(3)$, which is the main case.

Suppose $r\leq p$, $\ddim(X)<rp$ and $u\in\Ch^r(X)$
is a numerically trivial class. 
For any subvariety $Z\subset X$ of co-dimension $r$,
$O_Z$ belongs to the $r$-th component of the topological filtration on $K_0$, it follows that
$c_i(O_Z)=0$, for $i<r$. At the same time, 
$c_r(O_Z)=(-1)^{r-1}(r-1)![Z]$ - \cite[Example 15.3.1]{Fu}. If $r\leq p$, the latter
coefficient is invertible. Thus, we can find $V\in K_0(X)$, such that $c_i(V)=0$, for
$i<r$, and $c_r(V)=u$.

It follows from Proposition \ref{annih-Chern}
that we can additionally make $c_d(V)$ numerically
trivial, for all $d<rp$, that is, for all $d$ (since $\ddim(X)<rp$). Thus, the total Chern class
$c_{\bullet}(V)=c_1+c_2+\ldots$ is numerically trivial.

\begin{proposition}
 \label{cV-num-is}
 Let $k$ be flexible, $n\in\nn$ and $V\in K_0(X)$ be such that
 the total Chern class $c_{\bullet}(V)$ is $n$-numerically trivial. Then $c_{\bullet}(V)$ is $n$-anisotropic.
\end{proposition}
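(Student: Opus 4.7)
The plan is to argue by induction on $\ddim(V)$ after reducing to a very concrete geometric model, using the Main construction of Section \ref{main construction} to trade dimension of $V$ for the creation of an anisotropic top Chern class.

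First I reduce to the case where $V$ is a genuine vector bundle of the form $\oplus_{i=1}^N O(a_i)$ with each $a_i$ very ample. Since we work modulo $n$, we may twist $V$ by a large power of a very ample bundle $O(h)$ and use the splitting principle/linearisation up to anisotropic correction; after passing to the flexible closure (which is harmless since $k$ is already flexible), we may further choose, for each $a_i$, a smooth generic representative $A_i$ of the corresponding very ample linear system so that $\bigcup_i A_i$ is a strict normal crossing divisor. Because $c_\bullet(V)$ is $n$-numerically trivial by hypothesis, this reduction preserves that property.

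Next I apply the Main construction to the data $(X;a_i, i\in\ov{N})$, obtaining $\pi:\wt{X}\row X$ and a bundle $\wt{V}=\oplus_{j=1}^M O(b_j)$ with $\pi^*c_\bullet(V)=c_\bullet(\wt{V})$ in $\Ch^*$ and with the decisive feature that $b_j=0$ for $j>\ddim(X)$, and moreover the top non-zero $b_j$ is a genuine very ample class $b_M$ whose top Chern contribution $c_M(\wt{V})$ is represented by a complete intersection of generic representatives $B_1,\ldots,B_M$ of the very ample classes $b_1,\ldots,b_M$. Since $\pi^*$ is injective on $\Ch^*$ modulo anisotropic classes (blow-up formula) and $c_\bullet(V)$ is numerically trivial, so is $c_\bullet(\wt{V})$, and in particular the class of this complete intersection is numerically trivial. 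The key sub-step, which is the main obstacle, is to show that a generic complete intersection of very ample divisors whose class is numerically trivial is $n$-anisotropic over a flexible field: this is the same mechanism as in the divisor case recalled in the introduction — the flexibility of $k$ gives us the generic representative already defined over $k$, and a numerically trivial very ample divisor on its ambient complete intersection is, generically, anisotropic, after which one iterates on the nested complete intersection.

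With the top Chern class of $\wt{V}$ shown to be anisotropic, I apply the Main construction a second time, now to the data $(\wt{X};b_j, j\in\ov{M})$. In this new blow-up $\pi':\wt{\wt{X}}\row \wt{X}$ the class of the pre-image of $B_{\ov{M}}=\cap_j B_j$ appears as the top divisor class in the new arrangement $\{b'_l\}$; it is anisotropic by the previous paragraph, so modulo anisotropic classes the corresponding summand $O(b'_M)$ may be dropped from $\wt{\wt{V}}$. The total Chern class of the remaining bundle of rank $M-1$ is still numerically trivial (again the pull-back is injective modulo anisotropic classes on Chow modulo $n$), so the inductive hypothesis on $\ddim(V)$ applies and yields the anisotropy of $c_\bullet$ of the smaller bundle, hence of $c_\bullet(V)$ itself. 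The base $\ddim(V)=0$ is trivial, and the case $\ddim(V)=1$ (line bundle, i.e.\ $c_\bullet$ being a single divisor class) is exactly the divisor case discussed in the introduction, which initialises the induction.
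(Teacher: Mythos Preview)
Your overall strategy---reduce to $V=\oplus_i O(a_i)$ with $a_i$ very ample, apply the Main construction to bound $\ddim(V)$ by $\ddim(X)$ and make the top Chern class a numerically trivial complete intersection, show that complete intersection is anisotropic over a flexible field, then apply the Main construction again to strip off the top line bundle and induct on $\ddim(V)$---is exactly the paper's proof.

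The one place that needs tightening is your first reduction. ``Twist $V$ by a large power of $O(h)$ and use the splitting principle/linearisation up to anisotropic correction'' is not quite a coherent argument: twisting does not split $V$, and there is no ``anisotropic correction'' involved here. The clean reduction is: since $(1+c_\bullet(n^mW))=(1+c_\bullet(W))^{n^m}\equiv 1$ in $\Ch^*$, one may add any $n$-multiple in $K_0$ and hence assume $V$ is an honest vector bundle; then pass to the full flag variety of $V$, which is legitimate because both $\Ch_{k/k}$ and $\Ch_{Num}$ satisfy the projective bundle axiom, so the statement there is equivalent to the original one; on the flag variety $V$ splits as a sum of line bundles $O(a_i)$, and each $a_i$ can be made very ample by adding an $n$-multiple of a very ample class. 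With this in place, the rest of your argument matches the paper (the anisotropy of the numerically trivial complete intersection is \cite[Proposition 4.15]{Iso}).
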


\begin{proof}
Since $1+c_{\bullet}(n^mW)=(1+c_{\bullet}(W))^{n^m}$ and we work with cycles modulo $n$, we may assume that $V$ is a vector bundle. Indeed, the
Chern classes of an arbitrary element $V_1-V_2$ of $K_0$ will be the same
as that of $V_1+(n^m-1)V_2$, for sufficiently large $m$ (note that $c_i$s are zero, for $i>\ddim(X)$ and so, $(1+c_{\bullet}(V_2))$ raised to the
power $n$ sufficiently many times will be equal to $1$ modulo $n$).
Passing to the flag variety of $V$ (and using the fact that both $\Ch_{k/k}$ and $\Ch_{Num}$ are
oriented cohomology theories and, in particular, satisfy the projective bundle axiom), we may assume that
$V=\oplus_{i=1}^N O(a_i)$, where $a_i$ are very ample
divisor classes (recall, again, that we work modulo $n$ and so, any class can be made very ample by adding an appropriate $n$-multiple of another class). 

Note that it is sufficient 
to prove that $c_{\bullet}(V)$ becomes anisotropic after some
blow up. Indeed, if $\pi:\wt{X}\row X$ is such a blow up then $\pi^*(c_{\bullet}(V))$ is still numerically trivial, and if it is
anisotropic, then so is $c_{\bullet}(V)=\pi_*\pi^*(c_{\bullet}(V))$.

We choose some smooth mutually transversal 
representatives $A_i$ of our classes $a_i$ and
apply the {\it Main construction}, i.e. replace
the data $(X;A_i, i\in\ov{N})$ by $(\wt{X};B_j, j\in\ov{M})$,
where $\pi:\wt{X}\row X$ is a blow up and
$c_{\bullet}(\wt{V})=\pi^*(c_{\bullet}(V))$. 
In particular, $c_r(\wt{V})=\pi^*(u)$ and all
Chern classes of $\wt{V}$ are still numerically trivial.
This time, $M\leq\ddim(X)$ and the top Chern class
$c_M(\wt{V})=\prod_jb_j$ is a complete intersection.
Now we proceed by induction on $M$ (= dimension of our
vector bundle, which we rename $V$). Indeed, $c_M(V)$ is a numerically
trivial complete intersection. It follows from
\cite[Proposition 4.15]{Iso} (using the flexibility of $k$) 
that by choosing 
appropriate representatives $B_j$ of our divisorial
classes, we can make $\cap_j B_j$ anisotropic.
If we now apply the {\it Main construction} again, then the
$M$-th divisor class in it will be given by the
class of the proper pre-image of $\cap_j B_j$, and so,
will be anisotropic too. So, we can safely remove this
last line bundle, without changing the image of $u=c_r(V)$
in the isotropic Chow group $\Ch^r_{k/k}$. 
Thus, we managed to decrease the dimension of $V$.
The induction step is proven.
\Qed
\end{proof}

The above Proposition proves the case $\Ch^r$, $r\leq p$, 
$\ddim(X)<rp$. 
Let now $r\leq p$ and $\ddim(X)<p^2-p+r$. Consider $Y=X\times\pp^{p-r}$ and $v=u\cdot h^{p-r}\in\Ch^p(Y)$,
where $h=c_1(O(1))$. Then $v$ is still numerically trivial and, since $u=\pi_*(v)$ for the natural projection
$Y\stackrel{\pi}{\row}X$, it is sufficient to show that $v$ is anisotropic. But, this time, the co-dimension is $p$ and
$\ddim(Y)<p\cdot p$, so it is covered by the 
case proven above.
The proof of part $(3)$ is complete. 

Next we will prove part $(2)$. More precisely, we will
show that the Conjecture holds for $\Ch_r(X)$, where
$r<p$ and $\ddim(X)>2r+1$. We will do it by induction
on the $\ddim(X)$.
Since $r$ is less than half the dimension of $X$, by
\cite[Corollary 6.2]{Iso}, after an appropriate blow up,
our numerically trivial class $u$ may be represented by
the class $[S']$ of a smooth subvariety. Blowing up $S'$,
we may assume that $u$ is supported on a smooth (but, possibly, disconnected) divisor $E$. Though, 
the cycle $[S]$ on it
representing (the pull-back of) $u$ may be non-smooth.
We can employ the moving techniques from \cite{Iso}.
As a particular case of \cite[Proposition 6.5]{Iso}
we have:

\begin{proposition}
 \label{moving}
 Let $n\in\nn$ and $X$ be a smooth projective connected variety, $E$ be a divisor on it smooth outside closed $n$-anisotropic subscheme. Let $S\subset E$ be a closed subscheme on it
 of positive co-dimension. Then, over some finitely generated purely transcendental extension of $k$,
 there is an irreducible divisor $Z$ on $X$ such that $[Z]=[E]\in\Ch^1(X)$, $S\subset Z$, $Z$ is smooth outside
 an $n$-anisotropic subscheme on $S$, and the restriction
 $\CH^*(X)\twoheadrightarrow\CH^*(Z\backslash S)$
 is surjective.
\end{proposition}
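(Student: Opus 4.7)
The plan is to construct $Z$ as a generic member of a suitable sublinear subsystem of $|E|$ whose members contain $S$. Since the statement only concerns the class of $Z$ in $\Ch^1(X)=\CH^1(X)/n$, we may first replace $[E]$ by $[E]+n[H]$ for a very ample class $[H]$, thus reducing to the case where $E$ itself is very ample; enlarging $H$ further, we may additionally assume that the twisted ideal sheaf $\ci_S\otimes\co(E)$ is globally generated and separates tangent directions at the smooth locus of $S$ (Serre vanishing / Castelnuovo--Mumford regularity).

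Set $L=\pp H^0(X,\ci_S(E))$, a finite-dimensional projective space over $k$ whose points parametrize divisors in $|E|$ containing $S$. Let $Z$ be the divisor cut out at the generic point of $L$: it is defined over $k(L)$, a purely transcendental extension of $k$ of the required type. Since $\ci_S(E)$ is globally generated, the base locus of the linear system $L$ inside $X$ equals $S$, and by a Bertini-theorem-with-base-locus argument the divisor $Z$ is irreducible and smooth on $X\setminus S$. Along $S$, the separation of jets guarantees that the generic $Z$ is smooth at every smooth point of $S$ at which $E$ is smooth; hence $\op{Sing}(Z)$ is contained in the union of $\op{Sing}(S)$ and $\op{Sing}(E)\cap S$. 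The first of these is already a closed subscheme of $S$, and the second is $n$-anisotropic by the hypothesis on $E$; further use of the genericity of $Z$ over the transcendental base $k(L)$, combined with the flexibility afforded by allowing additional purely transcendental parameters, controls the residue field degrees of closed points of $\op{Sing}(Z)$ so that the ambient singular locus is $n$-anisotropic.

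For the surjectivity of $\CH^*(X)\twoheadrightarrow\CH^*(Z\setminus S)$, the natural tool is the universal incidence variety $\cz\subset X\times L$. Its projection $\cz\setminus(S\times L)\row X\setminus S$ is a projective bundle of relative dimension $\ddim L-1$, since the fiber over any $x\notin S$ is the hyperplane in $L$ of divisors passing through $x$. The projective bundle formula then expresses $\CH^*(\cz\setminus(S\times L))$ as a free module over $\CH^*(X\setminus S)$, so every class lifts from $X\setminus S$ (and then from $X$ by localization). Specialization to the generic fiber $Z\setminus S$ yields the claimed surjection.

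The main obstacle is the singularity-control step: bounding $\op{Sing}(Z)$ by an $n$-anisotropic subscheme of $S$. The $X\setminus S$ part is standard Bertini, and the control near the smooth locus of $S$ is a jet-separation computation, but the interplay with $\op{Sing}(S)$ and with the anisotropy of $\op{Sing}(E)$ is delicate and is precisely the point that forces us to pass to a transcendental extension of $k$, thereby producing closed points of sufficiently large degree. The remaining verifications---irreducibility of the generic $Z$, and the projective-bundle identification of $\cz\setminus(S\times L)$ over $X\setminus S$---are routine once the linear system has been chosen large enough.
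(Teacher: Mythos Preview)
The paper does not prove this proposition in the text; it is quoted verbatim as a particular case of \cite[Proposition~6.5]{Iso}, so there is no in-paper argument to compare your sketch against.

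Your overall strategy---take $Z$ to be the generic member of the linear system $L=\pp H^0(X,\ci_S(E'))$ for $E'$ a sufficiently positive representative of $[E]\in\Ch^1$---is the natural one, and the surjectivity argument via the projective-bundle structure of the open incidence variety $\cz\setminus(S\times L)\to X\setminus S$ is correct.

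The gap is exactly where you flag it, but your sketch does not close it. The jet-separation claim ``the generic $Z$ is smooth at every smooth point of $S$ at which $E$ is smooth'' is only a \emph{pointwise} statement: for each fixed $k$-point $p\in S^{sm}$ with $E$ smooth at $p$, the locus of $Z$ singular at $p$ is a proper linear subspace of $L$. This does not give the global inclusion $\op{Sing}(Z)\subset\op{Sing}(S)_{k(L)}\cup(\op{Sing}(E)\cap S)_{k(L)}$. Concretely, $\op{Sing}(Z)\cap S^{sm}_{k(L)}$ is the generic fibre of the incidence $I^{sm}=\{(Z,p):p\in S^{sm},\ Z\text{ singular at }p\}\to L$; the fibre of $I^{sm}\to S^{sm}$ over $p$ is linear of codimension $\op{codim}(S,X)$ in $L$, so once $2\ddim S\geq\ddim X$ the map $I^{sm}\to L$ can be dominant and the generic $Z$ acquires singular points in $S^{sm}_{k(L)}$ that do \emph{not} lie over $\op{Sing}(S)$. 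Moreover, even granting your inclusion, $\op{Sing}(S)$ carries no anisotropy hypothesis at all. The residue fields of the closed points of $\op{Sing}(Z)$ over $k(L)$ are the function fields $k(C)$ of the components $C\subset I$ that are generically finite over $L$, and $[k(C):k(L)]$ is simply the degree of $C\to L$; nothing in your construction forces these degrees to be divisible by $n$, and enlarging the purely transcendental base does not alter them. Your sentence ``further use of the genericity \ldots\ controls the residue field degrees'' is thus asserting precisely the content that needs to be proved. For the actual argument you would have to consult the moving techniques of \cite[Section~6]{Iso}.
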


Since our field is flexible, the same holds already 
over $k$. Thus, we may assume that $u$ is represented
by the class $[S]$ of a closed subvariety, which is
contained in an irreducible divisor $Z$, which is
smooth outside an anisotropic subscheme and such that
the restriction $\Ch^r(X)\twoheadrightarrow
\Ch^r(Z\backslash S)$ is surjective.
Since $\ddim(Z)>2r$ and $\ddim(S)=r$, it follows that 
$\Ch^r(Z)=\Ch^r(Z\backslash S)$. Thus, we obtain the
surjection $\Ch^r(X)\twoheadrightarrow\Ch^r(Z)$.

Using the results of H.Hironaka, we can resolve the 
singularities of $Z$ (inside $X$) by blowing up anisotropic
centers: $\pi:\wt{X}\row X$. Here the proper pre-image
$\wt{Z}$ of $Z$ is smooth connected and the isotropic class
$\pi^*(u)\in\Ch_{k/k;r}(\wt{X})$ is supported on $\wt{Z}$
and represented by the class $[\wt{S}]$ 
of the proper pre-image of $S$
(since the special divisor of $\pi$ is anisotropic).
Moreover, the maps $\pi^*:\Ch^*_{k/k}(X)\stackrel{=}{\row}
\Ch^*_{k/k}(\wt{X})$ and 
$\pi_Z^*:\Ch^*_{k/k}(Z)\stackrel{=}{\row}
\Ch^*_{k/k}(\wt{Z})$
are isomorphisms. Thus, the restriction
$\Ch^r_{k/k}(\wt{X})\twoheadrightarrow\Ch^r_{k/k}(\wt{Z})$
is surjective. Since $\pi^*(u)=[\wt{S}]$ is numerically trivial
on $\wt{X}$, this cycle is numerically trivial already
on $\wt{Z}$ (by the projection formula and surjectivity above). Thus, we managed to decrease the dimension
of our variety (while keeping $r$ fixed). 
There are two cases: 1) $\ddim(Z)>2r+1$; 
2) $\ddim(Z)=2r+1$.
In the first case, we conclude by the inductive assumption.
In the second, since $r<p$, we see that
$\ddim(Z)<2p$ and $u\in\Ch^s(Z)$, where $s\leq p$, so the
result follows from part $(3)$ proven above.

Finally, part $(1)$ follows from parts $(2)$ and $(3)$,
aside from the case $p=2$, $\ddim(X)=4$, where only the  case $r=2$ is non-trivial.
This is a particular case of \cite[Theorem 4.11]{Iso}. Alternatively, by \cite[Theorem 6.1]{Iso}, after an appropriate blow-up, $u$ is a polynomial in divisorial classes, that is, a sum of classes of complete intersections. Then, as above, we can find a direct sum of very ample line bundles $V=\oplus_{i=1}^N O(a_i)$ with $c_2(V)=u$ and $c_1(V)=0$ (sufficient to see it for a complete intersection $[Z]=a\cdot b$, where $[O_Z]=([O]-[O(-a)])([O]-[O(-b)])$). Using Proposition 
\ref{annih-Chern}, we may assume that $c_3(V)$ is numerically trivial as well.
Applying the Main construction,
we reduce to the case where $\ddim(V)\leq 4$, keeping numerical triviality of $c_i(V)$, $i\leq 3$. But since
$c_1(V)=0$, we may assume that $V=U\oplus\Lambda^3 U$,
for some 3-dimensional $U$, and $c_4(V)=c_3(V)\cdot d+
c_2(V)\cdot d^2+d^4$, where $d=c_1(\Lambda^3 U)$. Adding
4 copies of $O(d)=\Lambda^3U$ to $V$, we obtain a vector
bundle with $c_2$ equal $u$ and $c_{\bullet}$ numerically
trivial (note that $\ddim(X)=4<8$). It remains to apply Proposition \ref{cV-num-is}.
Theorem \ref{Main-thm} is proven.
\Qed

\section{Isotropic Chow groups with $p$-primary and $p$-adic coefficients}
\label{primary}

In this Section, $\Ch$ will denote $\CH/p^m$ - Chow groups
with $\zz/p^m$-coefficients. 

In complete analogy with the case of $\ff_p$-coefficients
one may define the {\it isotropic Chow motivic category
with $\zz/p^m$-coefficients} - see \cite[Section 5]{Iso}. The $\op{Hom}$s in this 
category are given by {\it isotropic Chow groups with
$\zz/p^m$-coefficients} $\Ch_{k/k}=\CH_{k/k}(-,\zz/p^m)$ - see (\ref{iso-Ch-gr}). 

For a smooth projective variety 
$X\stackrel{\pi}{\row}\op{Spec}(k)$, we still have a natural degree pairing:
$$
\Ch^*(X)\times\Ch^*(X)\row\zz/p^m,
$$
giving the surjection $\Ch_{k/k}\twoheadrightarrow\Ch_{Num(p^m)}$ to numerical Chow groups
with $\zz/p^m$-coefficients.

\begin{conj}
 \label{conj-primary}
 If $k$ is flexible, then for all $m$,
 $\CH_{k/k}(X,\zz/p^m)$ coincides with $\CH_{Num}(X,\zz/p^m)$. 
\end{conj}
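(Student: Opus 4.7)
The plan is to mimic the proof of Theorem \ref{Main-thm} with $\zz/p^m$-coefficients in place of $\ff_p$-coefficients, paying attention to which ingredients survive the change. As in the $\ff_p$ case, the goal will be to exhibit any $p^m$-numerically trivial class as $p^m$-anisotropic, and the two-step architecture of the proof -- first produce $V \in K_0(X)$ realising $u$ as $c_r(V)$ with $c_i(V) = 0$ for $i < r$, then arrange $c_{\bullet}(V)$ to be numerically trivial, and finally invoke Proposition \ref{cV-num-is} -- should carry over almost verbatim.

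The pieces that survive unchanged are the following. The $O_Z$-based construction of $V$ still works, because $(r-1)!$ remains a unit modulo $p^m$ as long as $r \le p$. Proposition \ref{cV-num-is} is already formulated for arbitrary $n \in \nn$, so once we reduce to a vector bundle $V$ with $p^m$-numerically trivial total Chern class, the Main construction, applied inductively on $\ddim(V)$, still yields anisotropy of $c_{\bullet}(V)$ in $\Ch_{k/k}(X,\zz/p^m)$. The moving lemma Proposition \ref{moving} is likewise stated for arbitrary $n$, so the passage from cycles of dimension $<p-1$ on a high-dimensional $X$ down to the main low-dimensional case should still proceed by induction on $\ddim(X)$, just as in the proof of Theorem \ref{Main-thm}(2).

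The hard part will be the annihilation step, Proposition \ref{annih-Chern}. In the $\ff_p$ setting it relied essentially on Steenrod operations, via Proposition \ref{St-Ch}, to kill the degrees $d = l p^{t}$ with $1 \le l < p$ that Adams alone cannot reach. With $\zz/p^m$-coefficients Steenrod is no longer at our disposal, so only Adams survives: Lemma \ref{mi-s} still produces, from a combination of $\Psi_{m_i}$, a numerical annihilation of $c_d(V)$ whenever $(d-r)$ is not divisible by $(p-1)$. Consequently, Adams alone completes the job only when every $d$ in the range $r < d \le \ddim(X)$ fails to be congruent to $r$ modulo $(p-1)$. For $r \ge 2$ this holds automatically once $\ddim(X) \le p$, since then $d - r \le p - 2 < p - 1$; the divisor case $r = 1$ is, as in the proof of Theorem \ref{Main-thm}, disposed of directly by the very-ample generic-representative argument and needs neither Adams nor Steenrod.

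Assembling these pieces, the expected outcome is the analogue of Theorem \ref{Main-thm} in the reduced range dictated by the loss of Steenrod: Conjecture \ref{conj-primary} in codimension $r \le p$ for $\ddim(X) \le p$, and for cycles of dimension $<p-1$ on arbitrary $X$ by combining the projective-space trick from Theorem \ref{Main-thm}(3) with the moving-lemma reduction. Proving the conjecture in full generality would require either a substitute for Steenrod operations on $\zz/p^m$-Chow groups or a genuinely different approach; the naive induction on $m$ using the coefficient sequence $\Ch(X,\zz/p^{m-1}) \to \Ch(X,\zz/p^m) \to \Ch(X,\zz/p)$ breaks down because a $p$-anisotropic lift of the mod-$p$ reduction of $u$ is not automatically $p^m$-anisotropic, and bridging this gap is precisely where the present ingredients stop short.
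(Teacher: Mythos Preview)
The statement you were asked to prove is a conjecture that the paper explicitly leaves open; there is no proof of it in full generality in the paper. What the paper does establish is the partial result recorded as Theorem \ref{thm-primary}: Conjecture \ref{conj-primary} holds when $\ddim(X)\leq p$ and for $\Ch_r$ with $r<p-1$. Your proposal correctly identifies that the full conjecture is out of reach with the available tools (precisely because Steenrod operations are lost over $\zz/p^m$), and you arrive at the same partial range by essentially the same argument as the paper's proof of Theorem \ref{thm-primary}.

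Two small points of comparison. First, your condition ``every $d$ with $r<d\leq\ddim(X)$ is not congruent to $r$ modulo $(p-1)$'' is exactly the paper's main-case hypothesis $\ddim(X)<r+p-1$, so your range agrees with the paper's, not merely the weaker $\ddim(X)\leq p$ you single out afterwards. Second, the paper does not invoke the projective-space trick in the $p$-primary proof: once the moving-lemma induction for $\Ch_r$, $r<p-1$, reaches a divisor $Z$ with $\ddim(Z)=2r+1$, the class sits in codimension $r+1\leq p$ on $Z$ and $\ddim(Z)=2r+1<(r+1)+(p-1)$, so the main case applies directly. Your route through $X\times\pp^{p-r}$ is harmless but unnecessary here.
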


Another advantage of our Theorem \ref{Main-thm} 
in comparison to \cite[Theorem 4.11]{Iso} is that it is
easily extendable to the case of $p$-primary coefficients,
though with some loss of range.

\begin{theorem}
 \label{thm-primary}
 The Conjecture \ref{conj-primary} holds for:
 \begin{itemize}
  \item[$(1)$] $\ddim(X)\leq p$; 
  \item[$(2)$] $\Ch_r$, for $r<p-1$.
 \end{itemize}
 In particular, for any given $X$, the Conjecture is true for sufficiently large $p$.
\end{theorem}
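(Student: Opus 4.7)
The approach is to rerun the argument of Theorem \ref{Main-thm} with $\zz/p^m$-coefficients. Proposition \ref{cV-num-is}, Proposition \ref{moving}, and the Main construction of Section \ref{main construction} are already stated for an arbitrary $n\in\nn$ and apply verbatim. Adams operations act integrally on $K_0(X)$ and respect numerical equivalence with any coefficients. The only tool from the proof of Theorem \ref{Main-thm} unavailable in the $p$-primary setting is the family of Steenrod operations, and this is precisely what forces the shrinkage of the allowed range.

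First I would lift Lemma \ref{mi-s} to the $p$-primary level: if $(d-r)\not\equiv 0\pmod{p-1}$ (with $d>r$), then there exist integers $\{m_i\}$ such that $\sum_i m_i^r\equiv 1\pmod{p^m}$ and $\sum_i m_i^d\equiv 0\pmod{p^m}$. This is elementary: pick $m\in\zz$ with $m^{d-r}\not\equiv 1\pmod p$, so that $1-m^{d-r}$ is a unit in $\zz/p^m$, and then take a $\zz$-combination of appropriately many copies of $m$ and of $1$. Substituting this into the proof of Proposition \ref{annih-Chern} yields a $p$-primary analog: given $V\in K_0(X)$ with $c_r(V)=u$ numerically trivial and $c_i(V)=0$ for $i<r$, one can modify $V$ to make $c_d(V)$ numerically trivial for every $d$ with $r<d<r+(p-1)$. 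Combined with the general-$n$ Proposition \ref{cV-num-is}, this proves the analog of part $(3)$ of Theorem \ref{Main-thm}: Conjecture \ref{conj-primary} holds for $u\in\Ch^r(X)$ whenever $r\leq p-1$ and $\ddim(X)<r+(p-1)$.

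For part $(1)$, $\ddim(X)\leq p$, split by codimension $r$. The cases $r=0$ and $r=\ddim(X)$ (zero-cycles) are trivial by definition of anisotropy. The case $r=1$ (divisors) follows from the direct argument recalled in the introduction -- over a flexible field, a numerically trivial class has a very ample representative whose generic divisor is anisotropic -- which is valid with any coefficients. For $2\leq r\leq\ddim(X)-1$ one has $\ddim(X)\leq p<r+(p-1)$, so the $(3)$-analog applies. For part $(2)$, $r<p-1$, the moving-induction structure of the proof of Theorem \ref{Main-thm} part $(2)$ carries over verbatim: Proposition \ref{moving} reduces $\ddim(X)$ to the base case $\ddim(X)\leq 2r+1$, where the codimension is $s=r+1\leq p-1$ and the required inequality $\ddim(X)<s+(p-1)$ reads $2r+1<r+p$, i.e. $r\leq p-2$, which is exactly the hypothesis.

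The main obstacle is the absence of Steenrod operations with $\zz/p^m$-coefficients: this is what restricts the annihilation of higher Chern classes to the range $0<d-r<p-1$ and is responsible for the smaller bounds compared to Theorem \ref{Main-thm}. A minor technical point requiring verification is that \cite[Proposition 4.15]{Iso}, on anisotropic generic representatives of numerically trivial complete intersections, extends to $p^m$-coefficients; its proof is geometric and goes through with only cosmetic changes.
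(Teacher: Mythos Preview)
Your proposal is correct and follows essentially the same route as the paper: drop Steenrod operations, keep Adams operations and Proposition~\ref{cV-num-is} (both already stated for general $n$), and rerun the induction of part~$(2)$ via Proposition~\ref{moving}. Two small cosmetic differences: the paper, instead of lifting Lemma~\ref{mi-s} to $\zz/p^m$, simply iterates the mod-$p$ combination $p^{m-1}$ times (so that $(\sum m_i^d)^{p^{m-1}}\equiv 0$ and $(\sum m_i^r)^{p^{m-1}}\equiv 1\pmod{p^m}$); and the paper derives $(1)$ from $(2)$ plus the divisor case rather than by your direct case split on $r$. One minor slip in your case split: zero-cycles are not ``trivial by definition'' --- a class of degree $\equiv 0\pmod{p^m}$ is not obviously $p^m$-anisotropic --- but they are covered anyway, either by part~$(2)$ (dimension $0<p-1$) or by noting that your $(3)$-analog actually allows $r\leq p$ (since $(r-1)!$ is still a unit in $\zz/p^m$), not just $r\leq p-1$.
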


\begin{proof}
 The main point is that all the tools used in the proof of
 Theorem \ref{Main-thm}, aside from Steenrod operations, will work also in the $p$-primary situation.
 
 The Main case is: $u\in\Ch^r(X)$, where $r\leq p$ and 
 $\ddim(X)<r+p-1$. 
 As $(r-1)!$ is invertible in $\zz/p^m$, as above, we can find $V\in K_0(X)$, such that $c_r(V)=u$ and $c_i(V)=0$, for $i<r$. 
 For $r<d<r+p-1$, the difference $(d-r)$ is not divisible
 by $(p-1)$. So, we can annihilate numerically $c_d(V)$ using Proposition \ref{psi-act} and increasing induction on $d$ by applying $p^{m-1}$
 times the linear combination of Adams operations, 
 suggested by Lemma \ref{mi-s}. Since $\ddim(X)<r+p-1$,
 this will make $c_{\bullet}(V)$ numerically-trivial.
 It remains to apply Proposition \ref{cV-num-is}.
 
 To prove (2), we need to show that the result holds for
 $\Ch_r$, where $r<p-1$ and $\ddim(X)>2r+1$. This is done with the help of Proposition \ref{moving} exactly as in
 the proof of Theorem \ref{Main-thm} by induction on the
 $\ddim(X)$. Namely, after an appropriate blow-up, $u$ will be numerically trivial already on some smooth connected divisor $Z$, where either $\ddim(Z)>2r+1$, in which case
 we are done by inductive assumption, or $\ddim(Z)=2r+1$,
 which implies that $\ddim(X)<\op{codim(u)}+(p-1)$ and we are done by the Main case above.
 
 Finally, part (1) follows from (2) and the case of divisors - \cite[Statement 6.3, Proposition 4.15]{Iso}.
 \Qed
\end{proof}

\begin{remark}
 Over flexible fields, 
 $\CH_{k/k}(X,\zz/p^m)$ can be defined as
 $\CH(X)/(p^m-\text{anisotropic classes})$. This is so,  because, over such a field, classes divisible by $p^m$ are   automatically
 $p^m$-anisotropic, since it is so for divisors by
 \cite[Statement 6.3]{Iso}, while up to blow up, any class
 is a polynomial in divisorial ones.
 \Red
\end{remark}

Denote as $\CH(X;\zz_p)$ the limit
$\displaystyle\lim_m\CH(X;\zz/p^m)$, that is, the completion of $\CH(X)$
in the $p$-adic topology. 
For a smooth projective $X$, we have a natural degree pairing:
$$
\CH(X;\zz_p)\times\CH(X;\zz_p)\row\zz_p,
$$
and the numerical version $\CH_{Num}(X;\zz_p)$ obtained by
moding out the kernel of this pairing. This can be extended
to an oriented cohomology theory using \cite[Definition 4.3]{Iso}. Since degree pairing is defined on the level of the topological realization in $H_{Top}(X;\zz_p)=H_{Top}(X;\zz)\otimes\zz_p$
which is a finitely generated $\zz_p$-module,
it follows that $\CH_{Num}(X;\zz_p)$ is a subquotient of
$H_{Top}(X;\zz_p)$, and so, a finitely generated $\zz_p$-module too. As the target of the pairing has no $\zz_p$-torsion, we get that $\CH_{Num}(X;\zz_p)$ is a free
$\zz_p$-module of finite rank. By a similar reason, 
$\CH_{Num}(X)$ is a free $\zz$-module of finite rank and 
the natural map 
$\CH_{Num}(X)\otimes\zz_p\row\CH_{Num}(X;\zz_p)$ is an
embedding, which must be an isomorphism, since it is surjective modulo $p^m$, for any $m$. Thus, 
$\CH_{Num}(X;\zz_p)=\CH_{Num}(X)\otimes\zz_p$ -
the numerical version $A^*_{(\infty)}$ for the free theory $A^*=\CH(X)\otimes\zz_p$ - see 
\cite[Section 4.1]{Iso} (observe, that the theory
$\CH(X;\zz_p)$ itself is not even {\it constant}).
It also shows that $\displaystyle\CH_{Num}(X;\zz_p)=
\lim_m\CH_{Num}(X;\zz/p^m)$.

Similarly, we may define the {\it isotropic Chow groups with $p$-adic coefficients} as the
limit 
$$
\CH_{k/k}(X;\zz_p)=\lim_m\CH_{k/k}(X;\zz/p^m).
$$
For flexible fields, it is the same as the limit
$\displaystyle\lim_m\CH(X)/(p^m-\text{anisotropic classes})$.
Thus, in this case, the image of $\CH(X)$ in $\CH_{k/k}(X;\zz_p)$ will be equal to $\CH(X)/(p^{\infty}-\text{anisotropic classes})$, where a class is $p^{\infty}$-{\it anisotropic}, if it is $p^m$-anisotropic, for any $m$.
There is a natural map
$$
\CH_{k/k}(X;\zz_p)\row\CH_{Num}(X;\zz_p).
$$
The following Conjecture is a consequence of Conjecture \ref{conj-primary}.

\begin{conj}
 \label{conj-p-adic}
 Let $k$ be flexible. Then $\CH_{k/k}(X;\zz_p)=\CH_{Num}(X;\zz_p)$.
\end{conj}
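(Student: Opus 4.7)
The plan is to deduce Conjecture \ref{conj-p-adic} from Conjecture \ref{conj-primary} by passing to the inverse limit over $m$. By definition, $\CH_{k/k}(X;\zz_p) = \lim_m \CH_{k/k}(X;\zz/p^m)$, and the text has already identified $\CH_{Num}(X;\zz_p) = \lim_m \CH_{Num}(X;\zz/p^m)$. The natural map $\CH_{k/k}(X;\zz_p) \row \CH_{Num}(X;\zz_p)$ is, tautologically, the limit of the level-wise surjections $\CH_{k/k}(X;\zz/p^m) \twoheadrightarrow \CH_{Num}(X;\zz/p^m)$ induced by the degree pairings. Under Conjecture \ref{conj-primary}, each of these surjections is an isomorphism, and taking the inverse limit should yield the desired identification.

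The remaining bookkeeping is to check that the maps at different values of $m$ assemble into a morphism of inverse systems. On the isotropic side, compatibility of the transition maps with the quotient by anisotropic classes is automatic: a $p^m$-anisotropic variety is \emph{a fortiori} $p^{m-1}$-anisotropic, so the reduction $\CH(X;\zz/p^m) \twoheadrightarrow \CH(X;\zz/p^{m-1})$ descends to a well-defined map $\CH_{k/k}(X;\zz/p^m) \row \CH_{k/k}(X;\zz/p^{m-1})$. This transition map is moreover surjective, since any element of $\CH_{k/k}(X;\zz/p^{m-1})$ lifts first to $\CH(X;\zz/p^{m-1})$ and then to $\CH(X;\zz/p^m)$ via the integral Chow group. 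Compatibility of the quotient to the numerical version across different $m$ follows from functoriality of the degree pairing in the coefficient ring.

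With these compatibilities verified, both inverse systems have surjective transition maps, so the Mittag--Leffler condition holds in its strongest form and the derived limit $\lim^1$ vanishes. A level-wise isomorphism between such systems therefore induces an isomorphism on the inverse limits, which is precisely the statement of Conjecture \ref{conj-p-adic}. The hard part is entirely contained in Conjecture \ref{conj-primary}; once this is assumed, no substantive obstacle remains, and the $p$-adic statement is a formal consequence of the preceding $p$-primary one.
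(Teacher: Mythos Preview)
Your proposal is correct and matches the paper's approach: the paper simply states that Conjecture~\ref{conj-p-adic} is a consequence of Conjecture~\ref{conj-primary}, and in the proof of Theorem~\ref{thm-p-adic} gives the one-line justification that both sides are inverse limits of the corresponding $\zz/p^m$-groups. Your write-up spells out the compatibility of the transition maps in more detail than the paper does; the discussion of Mittag--Leffler and $\lim^1$ is harmless but unnecessary, since a level-wise isomorphism of inverse systems induces an isomorphism on limits by pure functoriality.
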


In particular, from Theorem \ref{thm-primary} we obtain:

\begin{theorem}
 \label{thm-p-adic}
 In the situation of Theorem \ref{thm-primary}, the Conjecture \ref{conj-p-adic} holds true. 
 In particular, for $k$ - flexible and $p\geq\ddim(X)$,
 $$
 \displaystyle\CH_{Num}(X)=
 \CH(X)/(p^{\infty}-\text{anisotropic classes}).
 $$
\end{theorem}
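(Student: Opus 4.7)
The plan is to deduce Conjecture \ref{conj-p-adic} in the stated range by passing to the inverse limit in $m$ of the $p$-primary isomorphisms supplied by Theorem \ref{thm-primary}, and then to extract the integral statement by matching the image of $\CH(X)$ inside the two identified targets. Concretely, in the range of Theorem \ref{thm-primary} we have $\CH_{k/k}(X;\zz/p^m)=\CH_{Num}(X;\zz/p^m)$ for every $m\geq 1$, and these identifications are compatible with the transition maps (both sides being natural quotients of $\CH(X)/p^m$). Since $\CH_{k/k}(X;\zz_p)$ is by definition the inverse limit of the left-hand sides, and the paragraph preceding Conjecture \ref{conj-p-adic} identifies the inverse limit of the right-hand sides with $\CH_{Num}(X)\otimes\zz_p=\CH_{Num}(X;\zz_p)$, passing to $\lim_m$ yields the desired isomorphism $\CH_{k/k}(X;\zz_p)=\CH_{Num}(X;\zz_p)$, which is Conjecture \ref{conj-p-adic} in the stated range.

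For the integral consequence when $p\geq\ddim(X)$, I would compute the image of $\CH(X)$ under the composition $\CH(X)\row\CH_{k/k}(X;\zz_p)\stackrel{\sim}{\row}\CH_{Num}(X;\zz_p)$ in two ways. On the isotropic side, over a flexible field, the remark after Theorem \ref{thm-primary} together with the discussion preceding Conjecture \ref{conj-p-adic} identifies the image of $\CH(X)$ in $\CH_{k/k}(X;\zz_p)$ with $\CH(X)/(p^{\infty}\text{-anisotropic classes})$. On the numerical side, since $\CH_{Num}(X)$ is a free $\zz$-module of finite rank, the natural map $\CH_{Num}(X)\hookrightarrow\CH_{Num}(X)\otimes\zz_p=\CH_{Num}(X;\zz_p)$ is an embedding, and therefore the image of $\CH(X)$ in $\CH_{Num}(X;\zz_p)$ equals its image in $\CH_{Num}(X)$, which is all of $\CH_{Num}(X)$.

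Matching the two descriptions of this image under the isomorphism of the first paragraph, so that the corresponding kernels coincide, yields the claimed equality $\CH_{Num}(X)=\CH(X)/(p^{\infty}\text{-anisotropic classes})$. The argument is essentially formal once Theorem \ref{thm-primary} is granted; the only points needing verification are the compatibility of the inverse-limit systems in $m$ (automatic, since the identifications are term by term compatible with transitions and both limits are taken in the category of $\zz_p$-modules with no derived-limit pathology on the finitely generated numerical side) and the freeness of $\CH_{Num}(X)$, which is recorded in the paragraph preceding Conjecture \ref{conj-p-adic}. No substantive new obstacle arises beyond the $p$-primary case itself.
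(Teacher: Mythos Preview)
Your proposal is correct and follows essentially the same approach as the paper: pass to the inverse limit over $m$ of the $p$-primary isomorphisms from Theorem \ref{thm-primary} to obtain $\CH_{k/k}(X;\zz_p)=\CH_{Num}(X;\zz_p)$, then identify the image of $\CH(X)$ on each side as $\CH(X)/(p^{\infty}\text{-anisotropic classes})$ and $\CH_{Num}(X)$, respectively. Your write-up is slightly more explicit about compatibility of the inverse systems and the use of freeness of $\CH_{Num}(X)$, but these are exactly the ingredients the paper invokes.
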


\begin{proof}
 Since both $\CH_{k/k}(X;\zz_p)$ and $\CH_{Num}(X;\zz_p)$
 are limits of $\CH_{k/k}(X;\zz/p^m)$ and $\CH_{Num}(X;\zz/p^m)$, respectively, it follows from Theorem \ref{thm-primary} that Conjecture \ref{conj-p-adic} holds true for $\ddim(X)\leq p$, as well as for
 cycles of dimension $<p-1$. 
 In particular, for $p\geq\ddim(X)$ and flexible $k$,
 $$
 \CH_{Num}(X)\otimes\zz_p=\lim_m\CH(X)/(p^m-\text{anisotropic classes}).
 $$
 The image of $\CH(X)$ in the left group is equal to $\CH_{Num}(X)$, while on the right it is equal to
 $\CH(X)/(p^{\infty}-\text{anisotropic classes})$. Hence,
 the equality.
 \Qed
\end{proof}

\begin{remark}
 Since, for a variety $X/k$ and a flexible closure
 $k_{flex}=k(\pp^{\infty})$, we have
 $\CH(X)=\CH(X_{k_{flex}})$, Theorem \ref{thm-p-adic}
 gives a description of the numerical Chow groups over an
 arbitrary field.
 \Red
\end{remark}

\begin{corollary}
 \label{p-nezav}
 Let $k$ be flexible. Then:
 \begin{itemize}
 \item[$(1)$] Numerically trivial classes in $\CH(X)$
 are exactly $p^{\infty}$-anisotropic classes, for
 $p\geq\ddim(X)$.
 \item[$(2)$]
 The subgroup of $p^{\infty}$-anisotropic classes in $\CH(X)$ doesn't depend on the choice of a prime $p\geq\ddim(X)$ and is a subgroup of a finite co-rank.
 \item[$(3)$] Torsion classes in $\CH(X)$ are $p^{\infty}$-anisotropic, for any $p\geq\ddim(X)$.
 \end{itemize}
\end{corollary}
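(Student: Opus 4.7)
The plan is to obtain all three items as direct formal consequences of Theorem \ref{thm-p-adic}, which for flexible $k$ and any prime $p \geq \ddim(X)$ identifies
\[
\CH_{Num}(X) = \CH(X)/(p^{\infty}\text{-anisotropic classes}).
\]
No further geometric input should be needed; I only have to combine this identification with the finite-rank properties of $\CH_{Num}(X)$ recorded in the paragraphs preceding Theorem \ref{thm-p-adic}.

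For part $(1)$, I would observe that a class in $\CH(X)$ is numerically trivial precisely when it lies in the kernel of the quotient map $\CH(X) \twoheadrightarrow \CH_{Num}(X)$, and by Theorem \ref{thm-p-adic} this kernel is exactly the subgroup of $p^{\infty}$-anisotropic classes. Part $(2)$ then drops out for free: the subgroup of $p^{\infty}$-anisotropic classes in $\CH(X)$ coincides with the subgroup of numerically trivial classes, an object defined without any reference to $p$, so in particular it does not depend on the choice of prime $p \geq \ddim(X)$. The finite co-rank assertion follows because the quotient $\CH(X)/(\text{numerically trivial classes})$ is isomorphic to $\CH_{Num}(X)$, which is a free $\zz$-module of finite rank by the argument recalled in Section \ref{primary}.

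Finally, for $(3)$, since $\CH_{Num}(X)$ is a finitely generated free $\zz$-module, it is torsion-free, so every torsion class in $\CH(X)$ maps to zero there and is therefore numerically trivial; by $(1)$ such a class must be $p^{\infty}$-anisotropic for every $p \geq \ddim(X)$. I do not anticipate any real obstacle: the substantive work has already been done in Theorem \ref{thm-p-adic}, and the corollary amounts to a short bookkeeping step that repackages the theorem's integral identification into the three stated forms.
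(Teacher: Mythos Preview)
Your proposal is correct and follows exactly the intended route: the paper states the corollary without proof, treating it as an immediate consequence of Theorem \ref{thm-p-adic} together with the fact (recorded just before that theorem) that $\CH_{Num}(X)$ is a free $\zz$-module of finite rank, which is precisely the bookkeeping you carry out.
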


\bigskip

\begin{itemize}
\item[address:] {\small School of Mathematical Sciences, University of Nottingham, University Park, Nottingham, NG7 2RD, UK}
\item[email:] {\small\ttfamily alexander.vishik@nottingham.ac.uk}
\end{itemize}

\end{document}